\def\blfootnote{\gdef\@thefnmark{}\@footnotetext}
\theoremstyle{plain}
\newtheorem*{theorem*}{Theorem}
\newtheorem*{thma}{Theorem A}
\newtheorem*{thmb}{Theorem B}
\newtheorem{theorem}{Theorem}[section]
\newtheorem{lemma}[theorem]{Lemma}
\theoremstyle{Acknowledgments}
\theoremstyle{definition}
\theoremstyle{remark}
\newtheorem*{remark}{Remark}
\def\mod{{\rm Mod}}
\begin{document}
\blfootnote{\textup{2000} \textit{Mathematics Subject Classification}:
57M07, 20F05, 20F38}
\blfootnote{\textit{Keywords}:
Mapping class groups, punctured surfaces, involutions, generating sets}
\newenvironment{prooff}{\medskip \par \noindent {\it Proof}\ }{\hfill
$\square$ \medskip \par}
    \def\sqr#1#2{{\vcenter{\hrule height.#2pt
        \hbox{\vrule width.#2pt height#1pt \kern#1pt
            \vrule width.#2pt}\hrule height.#2pt}}}
    \def\square{\mathchoice\sqr67\sqr67\sqr{2.1}6\sqr{1.5}6}
\def\pf#1{\medskip \par \noindent {\it #1.}\ }
\def\endpf{\hfill $\square$ \medskip \par}
\def\demo#1{\medskip \par \noindent {\it #1.}\ }
\def\enddemo{\medskip \par}
\def\qed{~\hfill$\square$}

 \title[Generating $\mod^{*}(\Sigma_{g,p})$ by Three Involutions] {Generating the Extended Mapping Class Group by Three Involutions}

\author[T{\"{u}}l\.{i}n Altun{\"{o}}z,       Mehmetc\.{i}k Pamuk, and O\u{g}uz Y{\i}ld{\i}z ]{T{\"{u}}l\.{i}n Altun{\"{o}}z,    Mehmetc\.{i}k Pamuk, and Oguz Yildiz}

\address{Department of Mathematics, Middle East Technical University,
 Ankara, Turkey}
\email{atulin@metu.edu.tr}  \email{mpamuk@metu.edu.tr} \email{oguzyildiz16@gmail.com}

\begin{abstract}
We prove that the extended mapping class group, $\mod^{*}(\Sigma_{g})$, of a  connected orientable  surface of genus $g$,  can be generated by three involutions
for $g\geq 5$.  In the presence of punctures, we prove that $\mod^{*}(\Sigma_{g,p})$ can be generated by three involutions for $g\geq 10$ and $p\geq 6$ (with the exception that for $g\geq 11$, $p$ should be at least $15$).
\end{abstract}
\maketitle
  \setcounter{secnumdepth}{2}
 \setcounter{section}{0}
 
\section{Introduction}
Let $\Sigma_{g,p}$ denote a connected orientable surface of genus $g$ with $p\geq0$ punctures. When $p=0$, we drop it from the notation and write $\Sigma_{g}$. 
The mapping class group of $\Sigma_g$ is the group of isotopy classes of orientation preserving diffeomorphisms and is denoted by $\mod(\Sigma_g)$. 
It is a classical result that $\mod(\Sigma_g)$ is generated by finitely many Dehn twists about nonseparating simple closed curves~\cite{de,H,l3}.  
The study of algebraic properties of mapping class group, finding small generating sets, generating sets with particular properties, is an active one leading to interesting developments. 
Wajnryb~\cite{w} showed that $\mod(\Sigma_g)$ can be generated by two elements given as a product of Dehn twists. As the group is not abelian, this is the smallest 
possible. Korkmaz~\cite{mk2} improved this result by first showing that one of the two generators can be taken as a Dehn twist and the other as a torsion element. 
He also proved that $\mod(\Sigma_g)$ can be generated by two torsion elements. Recently, the third author showed that $\mod(\Sigma_g)$ is generated by two torsions of small orders~\cite{y1}.

Generating  $\mod(\Sigma_g)$ by involutions was first considered by McCarthy and Papadopoulus~\cite{mp}.  They showed that the group can be generated 
by infinitely many conjugates of a single involution (element of order two) for $g\geq 3$.     
In terms of generating by finitely many involutions, Luo~\cite{luo} showed that any Dehn twist about a nonseparating simple closed curve 
can be written as a product six involutions, which in turn implies that $\mod(\Sigma_g)$ can be generated by $12g+6$ involutions.  
Brendle and Farb~\cite{bf} obtained a generating set of six involutions for $g\geq3$. Following their work, Kassabov~\cite{ka} showed that 
$\mod(\Sigma_g)$ can be generated by four involutions if $g\geq7$.  Recently, Korkmaz~\cite{mk1} showed that $\mod(\Sigma_g)$ is generated by three involutions 
if $g\geq8$ and four involutions if $g\geq3$. The third author improved these results by showing that this group can be generated by three involutions if $g\geq6$~\cite{y2}.

The extended mapping class group $\mod^{*}(\Sigma_{g})$  is defined to be the group of isotopy classes of all self-diffeomorphisms of $\Sigma_{g}$. 
The mapping class group $\mod(\Sigma_{g})$ is an index two normal subgroup of $\mod^{*}(\Sigma_{g})$.
In~\cite{mk2}, it is proved that $\mod^{*}(\Sigma_{g})$ can be generated by two elements, one of which is a Dehn twist.
Moreover, it follows from~\cite[Theorem $14$]{mk2} that $\mod^{*}(\Sigma_{g})$ can be generated by three torsion elements for $g\geq1$. 
Also, Du~\cite{du1, du2} proved that $\mod^{*}(\Sigma_{g})$ can be generated by two torsion elements of order $2$ and $4g+2$ for $g\geq 3$.
In terms of involution generators,  as it contains nonabelian free groups, the minimal number of involution generators is three and 
Stukow~\cite{st} proved that $\mod^{*}(\Sigma_{g})$ can be generated by three involutions for $g\geq1$.  
Although our main interest in this paper is to find minimal generating sets for the extended mapping class group in the presence of punctures, in Section~\ref{S3}, 
we test our techniques to find minimal generating sets of involutions.  In this direction, we obtain the following result(see Theorems~\ref{thm2} and ~\ref{thm3}):  
\begin{thma}
For $g\geq 5$, the extended mapping class group $\mod^{*}(\Sigma_{g})$ can be generated by three involutions.
\end{thma}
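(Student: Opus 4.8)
The plan is to prove Theorem A by leveraging the known generation results for the orientation-preserving mapping class group together with a single orientation-reversing involution, following the general strategy that has worked for $\mod(\Sigma_g)$.

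The plan is to build the three involutions directly from the symmetries of a suitable symmetric embedding of $\Sigma_g$ in $\R^3$, arranging that exactly one of them reverses orientation. Concretely, I would place $\Sigma_g$ so that it is invariant under a $\pi$-rotation about a horizontal axis and under a reflection in a plane, and then take $\sigma_1,\sigma_2\in\mod(\Sigma_g)$ to be two orientation-preserving involutions (each essentially a $\pi$-rotation, possibly precomposed with a few Dehn twists), and $\sigma_3\in\mod^*(\Sigma_g)\setminus\mod(\Sigma_g)$ to be the reflection. The first bookkeeping step is routine: check that each $\sigma_i$ squares to the identity, and that $\sigma_3$ is orientation-reversing while $\sigma_1,\sigma_2$ preserve orientation, so that $\langle\sigma_1,\sigma_2,\sigma_3\rangle$ is automatically not contained in the index-two subgroup $\mod(\Sigma_g)$.

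Next I would analyze the products of pairs. Since a product of two involutions is a rotation, $\sigma_1\sigma_2$ should be a finite-order element whose powers cyclically permute a chain of nonseparating curves $c_1,\dots,c_{2g+1}$ on the surface. The crucial relations are the conjugation rules $\phi\,t_c\,\phi^{-1}=t_{\phi(c)}$ for orientation-preserving $\phi$ and $\phi\,t_c\,\phi^{-1}=t_{\phi(c)}^{-1}$ for orientation-reversing $\phi$; the sign flip coming from $\sigma_3$ is exactly what lets the reflection contribute genuinely new twist generators rather than merely permuting the old ones. Using these, I would first produce a single Dehn twist $t_{c_1}$ inside the group (the standard device is to exhibit a product such as $t_{c_1}t_{c_2}^{-1}$ as a word in the $\sigma_i$, and then separate the two twists using a rotation that fixes one curve and moves the other), and then sweep out the whole chain by conjugating $t_{c_1}$ with powers of $\sigma_1\sigma_2$.

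Once the group is shown to contain Dehn twists about all curves in a chain forming a Humphries-type generating set, I would invoke the classical fact that such twists generate $\mod(\Sigma_g)$, giving $\mod(\Sigma_g)\subseteq\langle\sigma_1,\sigma_2,\sigma_3\rangle$. Combined with the orientation-reversing element $\sigma_3$ and the index-two inclusion, this forces $\langle\sigma_1,\sigma_2,\sigma_3\rangle=\mod^*(\Sigma_g)$. Because the symmetric embeddings behave differently in even and odd genus, I expect the argument to split into two parallel cases, which is presumably why the statement is assembled from Theorems~\ref{thm2} and~\ref{thm3}.

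The hard part will be the second step: engineering the three involutions so that a \emph{single} Dehn twist can be extracted from their products, and so that orientation-reversing conjugation by $\sigma_3$ supplies the twists with the correct (non-inverse) signs needed to recover the full chain without cancellation. The extra reflection symmetry is what should allow the construction to succeed already at $g\geq 5$, one genus below the $g\geq 6$ threshold for the orientation-preserving group, since the reflection effectively identifies curves in pairs and so halves the amount of independent data the two rotations must encode; verifying that this folding genuinely lowers the genus bound, rather than merely reshuffling the relations, is the delicate point.
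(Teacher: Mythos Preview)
Your outline differs from the paper's proof in a structural way and, as written, is too vague to be a proof.

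First, the parity count of your generators is the opposite of the paper's. You propose two orientation-preserving involutions $\sigma_1,\sigma_2$ and one reflection $\sigma_3$. In the paper all three generators are orientation-\emph{reversing}: $\rho_1$ and $\rho_2$ are reflections in the $xz$-plane of a rotationally symmetric model of $\Sigma_g$, with $R=\rho_1\rho_2$ the rotation by $2\pi/g$, and the third generator is $\rho_1$ (resp.\ $\rho_1$) multiplied by a carefully chosen product of commuting Dehn twists, e.g.\ $\rho_1 A_1 B_2 C_{(g+3)/2} A_3$ for odd $g$. The point of that twist decoration is that $\rho_1$ exchanges or fixes the underlying curves, so the composite squares to the identity; this is checked explicitly, not inferred from a heuristic.

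Second, the paper never isolates a single Dehn twist or aims at a Humphries chain. Instead it invokes Korkmaz's theorem that $\mod(\Sigma_g)$ is generated by $R$ together with the three twist-\emph{quotients} $A_1A_2^{-1}$, $B_1B_2^{-1}$, $C_1C_2^{-1}$, and then shows (by quoting computations from \cite{apy}) that these three quotients lie in the subgroup $\langle \rho_1,\rho_2,\rho_1\cdot(\text{twists})\rangle$. Since $R$ and an orientation-reversing element are already there, this finishes the argument. Your ``separate $t_{c_1}t_{c_2}^{-1}$ into $t_{c_1}$ via a rotation fixing one curve'' step is thus not what happens, and is also the step you leave unspecified.

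Finally, your explanation for the threshold $g\ge 5$ --- that the reflection ``halves the independent data'' --- is not the mechanism in the paper. The bound arises because the particular curve indices in the third generator (e.g.\ $a_1,b_2,c_{(g+3)/2},a_3$) must be pairwise disjoint and lie in the correct $\rho_1$-orbits; this forces $g\ge 5$ in the odd case and $g\ge 6$ in the even case, with the odd $g=5$ case handled by a separate computation in \cite{apy}. If you want your sketch to become a proof, you need to actually name your three involutions and carry out the twist-extraction, or else switch to the paper's route via twist-quotients and Theorem~\ref{thm1}.
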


In the presence of punctures, the mapping class group $\mod(\Sigma_{g,p})$ is defined to be the group of isotopy classes of orientation-preserving 
self-diffeomorphisms of $\Sigma_{g,p}$ preserving the set of punctures. The extended mapping class group $\mod^{*}(\Sigma_{g,p})$
is defined as the group of isotopy classes of all (including orientation-reversing) self-diffeomorphisms of $\Sigma_{g,p}$ that preserve the set of punctures.
Kassabov~\cite{ka} gave involution generators of $\mod(\Sigma_{g,p})$,  proving that this group
can be generated by four involutions if $g>7$ or $g=7$ and $p$ is even, five involutions if $g>5$ or $g=5$ and $p$ is even, six involutions if $g>3$ or $g=3$ and $p$ is even
(Allowing orientation reversing involutions these results can also be used for $\mod^{*}(\Sigma_{g,p})$ \cite[Remark~$3$]{ka}). 
Later, Monden~\cite{m1} removed the parity conditions on the number of punctures. For $g\geq1$ and $p\geq2$, he~\cite{m2} also proved that $\mod(\Sigma_{g,p})$ can be 
generated by three elements, one of which is a Dehn twist. Moreover, he gave a similar generating set for $\mod^{*}(\Sigma_{g,p})$ consisting of three elements. Recently, Monden showed that $\mod(\Sigma_{g,p})$ and $\mod^{*}(\Sigma_{g,p})$ are generated by two elements~\cite{m3}.

In Section~\ref{S4}, we prove the following result, giving a partial answer to Question $5.6$ of \cite{m1}.
\begin{thmb}\label{thmb}
For $g\geq 10$ and $p\geq 6$ (with the exception that for $g\geq 11$, $p$ should be at least $15$), the extended mapping class group $\mod^{*}(\Sigma_{g,p})$ can be generated by three involutions.
\end{thmb}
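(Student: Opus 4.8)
The plan is to reduce the statement to a known finite generating set for $\mod^{*}(\Sigma_{g,p})$ and then to realize that set inside the subgroup generated by three carefully chosen involutions. Concretely, I would fix a configuration of simple closed curves whose Dehn twists generate $\mod(\Sigma_{g,p})$ modulo the permutation of the punctures, together with one element $\eta$ whose image realizes the full symmetric group $S_p$ on the punctures and one orientation-reversing involution; such a generating set is available from the work of Kassabov and Monden cited in the introduction. The target is then to exhibit three involutions $\rho_1,\rho_2,\rho_3$ and to prove that $G=\langle \rho_1,\rho_2,\rho_3\rangle$ contains each element of this generating set.

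The construction I would use mirrors the closed-surface case behind Theorem~A. Embed $\Sigma_{g,p}$ in $\R^3$ symmetrically, placing the $p$ punctures at points compatible with a rotation $R$ of some order $n$ about the central axis, so that $R$ permutes the handles cyclically and permutes the punctures in one or two orbits. Since a planar rotation is a product of two reflections, I would take $\rho_1,\rho_2$ to be orientation-reversing involutions (reflections of the embedded surface) with $\rho_1\rho_2=R$. The third involution $\rho_3$ would be chosen so that some short word in $\rho_1,\rho_2,\rho_3$ is recognizable, via a chain or lantern relation, as a Dehn twist $t_a$ about a single nonseparating curve $a$.

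With these pieces in place the argument proceeds in standard steps. First, having $t_a\in G$ and $R\in G$, the conjugates $R^{i}t_aR^{-i}$ produce Dehn twists about every curve in the chosen configuration, so $G$ contains the entire Dehn-twist part generating $\mod(\Sigma_{g,p})$ up to puncture permutations. Second, I would verify that $G$ surjects onto $S_p$: the rotation $R$ contributes a long cycle (or a product of cycles) on the punctures, while a suitable involution among the $\rho_i$, or a product realizing a half-twist exchanging two punctures, supplies a transposition, and a cycle together with such a transposition generate $S_p$ provided the orbit structure is right. Third, since at least one $\rho_i$ is orientation-reversing, $G\not\subseteq \mod(\Sigma_{g,p})$, whence $G=\mod^{*}(\Sigma_{g,p})$.

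The main obstacle, and the source of the numerical hypotheses, is the simultaneous bookkeeping in the second step: the same symmetric embedding must support both a rotation that conjugates $t_a$ across the whole genus and a puncture arrangement whose $R$-orbits, combined with an available transposition, generate all of $S_p$. Matching the order $n$ of $R$ to the genus is what forces $g\geq 10$, while distributing the punctures into orbits of admissible sizes forces $p\geq 6$; the shift in the optimal rotation order as $g$ crosses from $10$ to $\geq 11$ is what I expect degrades the puncture bound to $p\geq 15$ in the exceptional range. I would therefore anticipate that the proof splits into a small number of cases according to the residue of $g$ (and possibly $p$) modulo the rotation order, with the bulk of the work being the explicit verification that the chosen maps are genuine involutions and that the claimed word in them equals $t_a$.
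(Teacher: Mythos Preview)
Your high–level framework matches the paper: two reflections $\rho_1,\rho_2$ with $R=\rho_1\rho_2$ a rotation, a third involution built from one of the $\rho_i$, and the short exact sequence $1\to\mod_0^{*}(\Sigma_{g,p})\to\mod^{*}(\Sigma_{g,p})\to S_p\to 1$ to finish. But the mechanism you propose for the core step is not the one that works, and as stated it has two genuine gaps.

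First, you plan to arrange that ``some short word in $\rho_1,\rho_2,\rho_3$ is recognizable, via a chain or lantern relation, as a Dehn twist $t_a$'' and then conjugate $t_a$ by powers of $R$ to reach every curve in a Humphries-type configuration. This cannot succeed: the curves $a_i,b_i,c_i$ lie in \emph{distinct} $R$-orbits, so a single $t_a$ together with $R$ yields only one orbit of twists, far from a generating set. What the paper actually does is take $\rho_3=\rho_2E_1$ (respectively $\rho_1G_1$) where $E_1$ is a product of \emph{several commuting} Dehn twists about curves in different $R$-orbits together with a half-twist, e.g.\ $E_1=H_{b,b+2}B_{k-3}A_{k-1}C_kA_{k+2}B_{k+4}$. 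Then $E_1\in H$, and the substantial work (Lemmas~\ref{lem2k}--\ref{lem11}) is a cascade of conjugations of $E_1$ by powers of $R$ and by each other to peel off ratios $B_iC_i^{-1}$, $C_iB_{i+1}^{-1}$, $A_iB_i^{-1}$; these feed into Korkmaz's criterion (Theorem~\ref{thm1}) that $R$ together with $A_1A_2^{-1},B_1B_2^{-1},C_1C_2^{-1}$ generate $\mod(\Sigma_g)$. No chain or lantern relation enters.

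Second, the surjection onto $S_p$ does not come for free from the $\rho_i$: the images of $\rho_1,\rho_2$ in $S_p$ generate only the dihedral group of order $2p$, not $S_p$. The paper deliberately inserts the half-twist $H_{b,b+2}$ (or $H_{b,b+1}$) as a factor of $E_1$; once the Dehn-twist factors of $E_1$ are shown to lie in $H$, the half-twist itself lies in $H$, and its image together with the $p$-cycle from $R$ generates $S_p$ (Lemma~\ref{symm}). This, and the need for enough room in the indices during the conjugation cascade (e.g.\ $b_{k-5},\ldots,b_{k+7}$ and $z_{b-2},\ldots,z_{b+4}$ must all exist), is the true source of the bounds $g\ge 10$, $p\ge 6$, and of the anomalous $p\ge 15$ when $g=11$, where a different curve pattern with larger $R$-shifts is required.
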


\begin{remark}
At the end of the paper, we also show that the same result holds for $g\geq 10$ and $p=1,2,3$.
\end{remark}

Before we finish the introduction, let us point out that by the version of Dehn-Nielsen-Baer theorem for punctured surfaces (see \cite[Section~8.2.7]{FM}), 
$\mod^{*}(\Sigma_{g,p})$ is isomorphic to the subgroup of the outer automorphism group $Out(\pi_1(\Sigma_{g,p}))$ consisting of elements that preserve the set of 
conjugacy classes of the simple closed curves surrounding individual punctures.  Note also that these conjugacy classes are precisely the primitive conjugacy classes 
that correspond to the parabolic elements of the group of isometries of the hyperbolic plane.

\medskip

\noindent
{ Acknowledgements.}
 We would like to thank Tara Brendle for her  helpful comments.  We also would like to thank the referee for carefully reading our manuscript, pointing out an error in an earlier version and suggesting useful ideas which improved the paper.
The first author was partially supported by the Scientific and Technological Research Council of Turkey (T\"{U}B\.{I}TAK)[grant number 117F015].


\par  
\section{Background and Results on Mapping Class Groups} \label{S2}

 Let $\Sigma_{g,p}$ be a connected orientable surface of genus $g$ with $p$ punctures specified by the set $P=\lbrace z_1,z_2,\ldots,z_p\rbrace$ of $p$ distinguished points. If $p$ is zero then we omit from the notation. {\textit{The mapping class group}} 
 $\mod(\Sigma_{g,p})$ of the surface $\Sigma_{g,p}$ is defined to be the group of the isotopy classes of orientation preserving
 diffeomorphisms $\Sigma_{g,p} \to \Sigma_{g,p}$ which fix the set $P$. {\textit{The extended mapping class group}} $\mod^{*}(\Sigma_{g,p})$ of the surface $\Sigma_{g,p}$ is defined to be the group of isotopy classes of all (including orientation-reversing) diffeomorphisms of $\Sigma_{g,p}$ which fix the set $P$. Let $\mod_{0}^{*} (\Sigma_{g,p})$ denote the subgroup of $\mod^{*}(\Sigma_{g,p})$ which consists of elements fixing the set $P$ pointwise. It is obvious that we have the exact sequence:
 \[
1\longrightarrow \mod_{0}^{*}(\Sigma_{g,p})\longrightarrow \mod^{*}(\Sigma_{g,p}) \longrightarrow S_{p}\longrightarrow 1,
\]
where $S_p$ denotes the symmetric group on the set $\lbrace1,2,\ldots,p\rbrace$ and the restriction of the isotopy class of a diffeomorphism to its action on the puncture points gives the last projection. \par
Let $\beta_{i,j}$ be an embedded arc joining two punctures $z_i$ and $z_j$ and not intersecting $\delta$ on $\Sigma_{g,p}$. Let $D_{i,j}$ be a closed regular neighbourhood of $\beta_{i,j}$ such that it is a disk with two punctures. There is a diffeomorphism $H_{i,j}: D_{i,j} \to D_{i,j}$, which interchanges the punctures such that $H_{i,j}^{2}$ is the right handed Dehn twist about $\partial D_{i,j}$ and is equal to the identity on the complement of the interior of $D_{i,j}$. Such a diffeomorphism is called \textit{the (right handed) half twist} about $\beta_{i,j}$. One can extend it to a diffeomorphism of $\mod(\Sigma_{g,p})$. Throughout the paper we do not distinguish a 
 diffeomorphism from its isotopy class. For the composition of two diffeomorphisms, we
use the functional notation; if $g$ and $h$ are two diffeomorphisms, then
the composition $gh$ means that $h$ is applied first.\\
\indent
 For a simple closed 
curve $a$ on $\Sigma_{g,p}$, following ~\cite{apy,mk1} the right-handed 
Dehn twist $t_a$ about $a$ will be denoted by the corresponding capital letter $A$.

Now, let us recall the following basic properties of Dehn twists which we use frequently in the remaining of the paper. Let $a$ and $b$ be 
simple closed curves on $\Sigma_{g,p}$ and $f\in \mod^{*}(\Sigma_{g,p})$.
\begin{itemize}
\item \textbf{Commutativity:} If $a$ and $b$ are disjoint, then $AB=BA$.
\item \textbf{Conjugation:} If $f(a)=b$, then $fAf^{-1}=B^{s}$, where $s=\pm 1$ 
depending on whether $f$ is orientation preserving or orientation reversing on a 
neighbourhood of $a$ with respect to the chosen orientation.
\end{itemize}

\section{Involution generators for $\mod^{*}(\Sigma_g)$}\label{S3}
We start with this section by embedding $\Sigma_g$ into $\mathbb{R}^{3}$ so that it is invariant under the reflections $\rho_1$ and $\rho_2$ (see Figures~\ref{GOC} and~\ref{GEC}). Here, $\rho_1$ and $\rho_2$ are the reflections in the $xz$-plane so that $R=\rho_1\rho_2$ is the rotation by $\frac{2\pi}{g}$ about the $x$-axis. Now, let us recall the following set of generators given by Korkmaz~\cite[Theorem~$5$]{mk1}.

\begin{theorem}\label{thm1}
If $g\geq3$, then the mapping class group $\mod(\Sigma_g)$ is generated by the four elements $R$, $A_1A_{2}^{-1}$, $B_1B_{2}^{-1}$, $C_1C_{2}^{-1}$.
\end{theorem}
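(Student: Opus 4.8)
The plan is to prove that the subgroup $G=\langle R,\, A_1A_2^{-1},\, B_1B_2^{-1},\, C_1C_2^{-1}\rangle$ exhausts $\mod(\Sigma_g)$. First I would record the action of $R$ on the configuration: the curves are arranged so that the rotation cyclically permutes each of the three families, $R(a_i)=a_{i+1}$, $R(b_i)=b_{i+1}$, $R(c_i)=c_{i+1}$ with indices read modulo $g$, so that conjugation by powers of $R$ carries $A_1A_2^{-1}$ to every $A_iA_{i+1}^{-1}$, and likewise for the $B$'s and $C$'s. Because the products $(A_iA_{i+1}^{-1})(A_{i+1}A_{i+2}^{-1})\cdots$ telescope by cancellation, $G$ in fact contains every \emph{difference} $A_iA_j^{-1}$, and similarly $B_iB_j^{-1}$ and $C_iC_j^{-1}$.

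The conceptual backbone is the following observation, valid precisely for $g\geq 3$: the set of all difference elements $T_aT_b^{-1}$, with $a,b$ nonseparating, generates $\mod(\Sigma_g)$. Indeed, the subgroup $N$ they generate is normal, since conjugation sends $T_aT_b^{-1}$ to $T_{f(a)}T_{f(b)}^{-1}$; the quotient $\mod(\Sigma_g)/N$ is generated by the common image of all Dehn twists (twists generate the group and any two differ by an element of $N$), so it is cyclic and therefore a quotient of $H_1(\mod(\Sigma_g))$, which vanishes for $g\geq 3$. Hence $N=\mod(\Sigma_g)$. This is exactly the step that forces the hypothesis $g\geq 3$: for $g=1,2$ the group $H_1(\mod(\Sigma_g))$ is nontrivial, and differences alone cannot then suffice.

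The remaining work is to certify that the particular differences inside $G$, together with $R$, already reach a full generating family. Here I would exploit intersections between different families. Arranging the configuration so that, say, $a_1$ meets $c_1$ once while the other relevant pairs are disjoint, a direct computation using the commutation and braid relations gives $[A_1A_2^{-1},\,C_1C_2^{-1}]=[A_1,C_1]=T_{A_1(c_1)}\,C_1^{-1}$, a new difference between $c_1$ and the curve $A_1(c_1)$ lying outside the original families. Iterating such commutators and spreading them over the surface by conjugating with $R$ and with the elements of $G$ already produced enlarges the set of curves among which $G$ contains all differences, with the aim of reaching a standard (Humphries/Lickorish-type) generating set. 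The main obstacle is precisely this bookkeeping: one must verify that the reachable differences suffice to recover a known generating set, while conjugating only by elements already known to lie in $G$ rather than by arbitrary mapping classes. Once this is done, the backbone observation gives $G=\mod(\Sigma_g)$, completing the proof.
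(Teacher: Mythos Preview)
The paper does not prove this theorem; it is quoted from Korkmaz \cite[Theorem~5]{mk1}, so there is no in-paper argument to compare against. That said, your proposal has a genuine gap and a concrete error.

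Your backbone observation---that the differences $T_aT_b^{-1}$ over all nonseparating $a,b$ generate $\mod(\Sigma_g)$ for $g\ge 3$---is a statement about a \emph{normal} subgroup (the normal closure of any one such difference). Since $G$ is not known to be normal, this reduces nothing: you must still produce, inside $G$ and conjugating only by elements already shown to lie in $G$, a family of differences rich enough to generate. That is exactly the step you label ``bookkeeping'' and leave undone, and it is in fact the entire content of the theorem. Worse, the specific mechanism you propose for crossing between families fails in the actual curve configuration used here (and in \cite{mk1}): the curves $a_i$ are disjoint from all the $c_j$; the only single intersections are $a_i\cap b_i$, $b_i\cap c_i$, and $c_i\cap b_{i+1}$. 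Hence $[A_1A_2^{-1},\,C_1C_2^{-1}]$ is the identity, not $[A_1,C_1]$. Korkmaz's argument instead exploits the $a$--$b$ and $b$--$c$ intersections to manufacture mixed differences such as $A_iB_i^{-1}$, $B_iC_i^{-1}$, $C_iB_{i+1}^{-1}$ by explicit conjugations, and then extracts an \emph{individual} Dehn twist; once one twist lies in $G$, all of them do via the differences already in hand. Your outline never reaches that extraction step, and without it the argument cannot close.
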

By adding an orientation reversing self-diffeomorphism to the above generating set, one can easily see that $\mod^{*}(\Sigma_g)$ can be generated by five elements. In the following theorems, we show that one can reduce the number of generators to three and all the generators are of order two.
\begin{figure}[hbt!]
\begin{center}
\scalebox{0.22}{\includegraphics{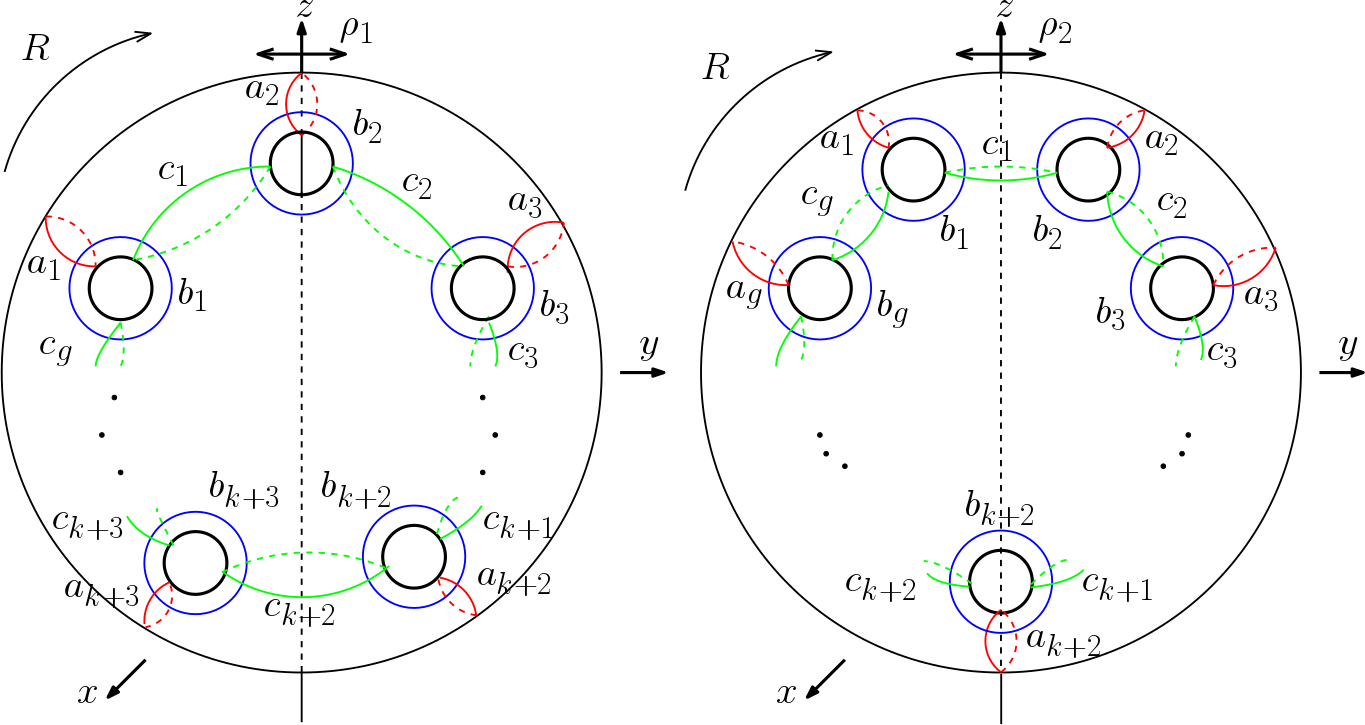}}
\caption{The reflections $\rho_1$ and $\rho_2$ on $\Sigma_{g}$ if $g=2k+1$.}
\label{GOC}
\end{center}
\end{figure}
\begin{figure}[hbt!]
\begin{center}
\scalebox{0.22}{\includegraphics{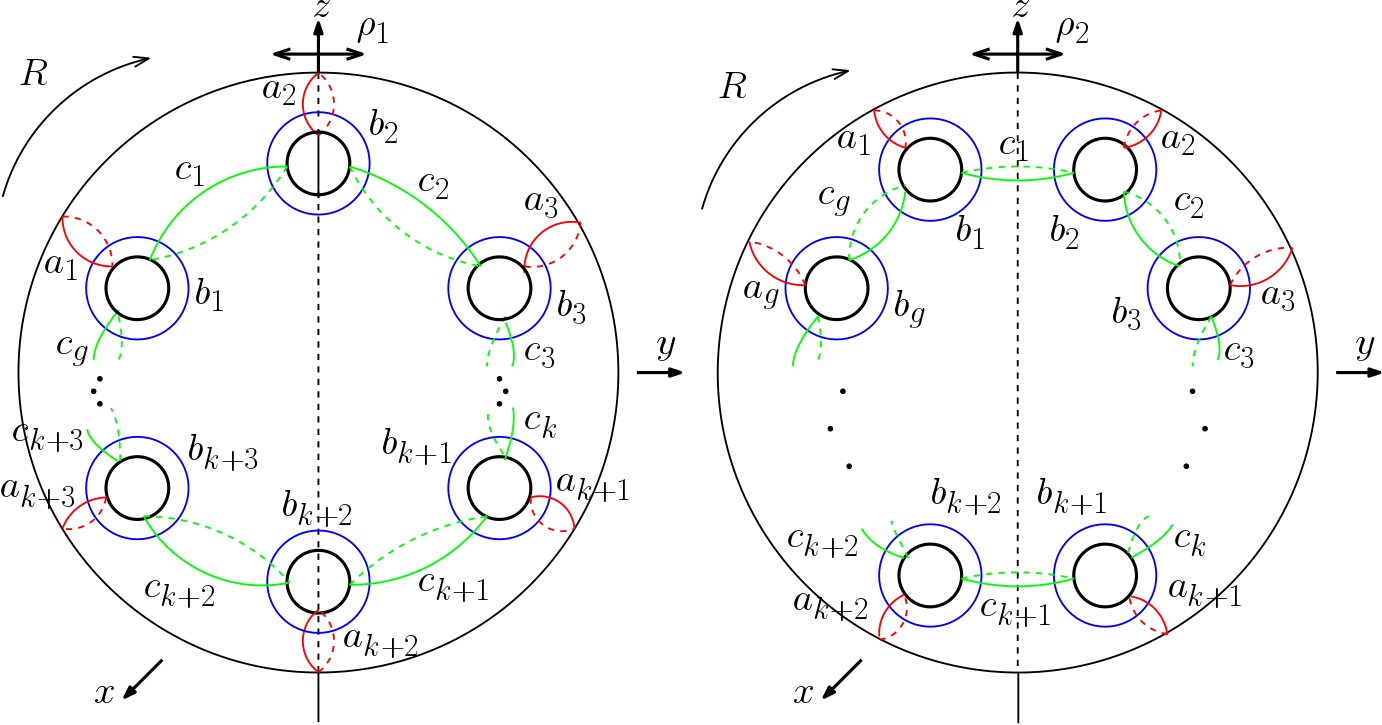}}
\caption{The reflections $\rho_1$ and $\rho_2$ on $\Sigma_{g}$ if $g=2k$.}
\label{GEC}
\end{center}
\end{figure}

\begin{theorem}\label{thm2}
If $g\geq5$ and odd, then $\mod^{*}(\Sigma_g)$ is generated by the involutions $\rho_1$, $\rho_2$ and $\rho_1A_1B_2C_{\frac{g+3}{2}} A_3$. 
\end{theorem}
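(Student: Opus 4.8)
The plan is to show that the three involutions generate a subgroup $G\subseteq\mod^{*}(\Sigma_g)$ that contains all four Korkmaz generators $R$, $A_1A_2^{-1}$, $B_1B_2^{-1}$, $C_1C_2^{-1}$ from Theorem~\ref{thm1}, together with at least one orientation-reversing element; since those four elements generate the index-two subgroup $\mod(\Sigma_g)$ and an orientation-reversing map takes us outside it, this forces $G=\mod^{*}(\Sigma_g)$. Let me write $\sigma=\rho_1 A_1 B_2 C_{(g+3)/2} A_3$ for the third generator. I would first verify that all three generators really are involutions: $\rho_1$ and $\rho_2$ are honest reflections, and for $\sigma$ one checks that the curves $A_1$, $B_2$, $C_{(g+3)/2}$, $A_3$ are pairwise disjoint so the Dehn twists commute, and that $\rho_1$ conjugates this product of twists to the product of the inverse twists about the reflected curves. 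The key configuration point is that $\rho_1$ should map the set $\{A_1,B_2,C_{(g+3)/2},A_3\}$ to itself (with orientation reversed), so that $\rho_1(A_1B_2C_{(g+3)/2}A_3)\rho_1^{-1}=(A_1B_2C_{(g+3)/2}A_3)^{-1}$, which immediately gives $\sigma^2=1$.

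The heart of the argument is recovering the Korkmaz generators inside $G$. Since $\rho_1,\rho_2\in G$, we immediately have the rotation $R=\rho_1\rho_2\in G$. Next I would extract a single Dehn-twist product from $\sigma$: because $\rho_1\in G$, we have $\rho_1\sigma = A_1 B_2 C_{(g+3)/2} A_3\in G$, a product of four commuting twists. The plan is then to use conjugation by powers of $R$ (which permutes the curves $A_i$, $B_i$, $C_i$ cyclically by the rotational symmetry) to produce many rotated copies of this four-twist product, and to multiply suitable copies against each other so that most twists cancel and one is left with the antisymmetric pairs $A_1A_2^{-1}$, $B_1B_2^{-1}$, $C_1C_2^{-1}$. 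Concretely, conjugating $A_1B_2C_{(g+3)/2}A_3$ by $R^k$ shifts all four indices by $k$, and by choosing indices so that three of the four twists in one copy coincide with three of the four in another rotated copy, the product of one with the inverse of the other collapses to a difference of two twists of the form $X_iY_j^{-1}$; iterating and recombining should isolate each of the three required generators. This is the step I expect to be the main obstacle: one must track the indices of $A$, $B$, and $C$ curves through the $R$-action carefully and check that the cancellations are exactly as needed, which is where the genus bound $g\geq5$ and the oddness hypothesis enter (the curve $C_{(g+3)/2}$ is the ``middle'' $C$-curve fixed by the symmetry only when $g$ is odd).

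Once $A_1A_2^{-1}$, $B_1B_2^{-1}$, $C_1C_2^{-1}$, and $R$ are all shown to lie in $G$, Theorem~\ref{thm1} gives $\mod(\Sigma_g)\subseteq G$. Finally, since $\rho_1$ is orientation-reversing and $\mod(\Sigma_g)$ has index two in $\mod^{*}(\Sigma_g)$, the subgroup $G$ strictly contains $\mod(\Sigma_g)$ and hence equals the whole extended mapping class group. I would present the curve configuration and the $R$-action on curve labels explicitly (referring to Figure~\ref{GOC} for the odd-genus picture), state the commuting and conjugation relations via the two bulleted Dehn-twist properties recalled in Section~\ref{S2}, and carry out the index-shifting cancellations as a short sequence of displayed identities rather than grinding every intermediate product.
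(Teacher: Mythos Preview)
Your approach is essentially the paper's: check that $\rho_1$ permutes the set $\{a_1,b_2,c_{(g+3)/2},a_3\}$ so that $\sigma$ is an involution, note $R=\rho_1\rho_2\in G$, recover $A_1A_2^{-1},B_1B_2^{-1},C_1C_2^{-1}$ inside $G$, and then invoke Theorem~\ref{thm1} plus the orientation-reversing $\rho_1$ to conclude; the paper does exactly this and outsources the middle computation to \cite[Theorems~3.3 and~3.4]{apy}.

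One caution on your sketch of that computation. The mechanism you describe---shift by $R^k$ and arrange that three of the four twists in one rotated copy coincide with three in another so the ratio collapses---cannot work as stated: $R^k$ shifts all indices uniformly, and the four curves are of different types $a,b,c,a$, so no nontrivial shift makes three of them literally match. The device actually used (visible in the proofs of Lemmas~\ref{lem2k}--\ref{lem11} and in \cite{apy}) is to conjugate one shifted copy by a product of two shifted copies, e.g.\ $(E_iE_j)E_i(E_iE_j)^{-1}$, exploiting the braid relation between curves that intersect once to trade a $C$-curve for a $B$-curve (or a $B$ for an $A$) in the product; only after such swaps can one divide two products and collapse to a two-twist ratio $X_iY_j^{-1}$. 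Your ``iterating and recombining'' phrase is headed in the right direction, but when you write it up you will need this conjugation trick, not just shift-and-divide.
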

\begin{proof}
Consider the surface $\Sigma_{g}$ as in Figure~\ref{GOC} and observe that the involution $\rho_1$ satisfies
\[
 \rho_1(a_1)=a_3, \rho_1(b_{2})=b_{2} \textrm{ and } \rho_1(c_{\frac{g+3}{2}})=c_{\frac{g+3}{2}}.
\]
Since $\rho_1$ reverses the orientation of a neighbourhood of any simple closed curve, we get
\[
\rho_1A_1\rho_1=A_{3}^{-1}, \rho_1B_{2}\rho_1=B_{2}^{-1} \textrm{ and } \rho_1C_{\frac{g+3}{2}}\rho_1=C_{\frac{g+3}{2}}^{-1}.
\]
It is easily seen that $\rho_1A_1B_2C_{\frac{g+3}{2}} A_3$ is an involution. Let $K$ be the subgroup of $\mod^{*}(\Sigma_{g})$ generated by the set
	\[
	\lbrace \rho_1,\rho_2, \rho_1A_1B_2C_{\frac{g+3}{2}} A_3 \rbrace.
	\]
Note that the rotation $R$ and the orientation reversing diffeomorphism $\rho_1$ (or $\rho_2$) are contained in $K$. Hence, all we need to show is that 
the elements $A_1A_{2}^{-1}, B_1B_{2}^{-1}$ and $C_1C_{2}^{-1}$ belong to $K$. For $g\geq7$ and odd, by proof of ~\cite[Theorem $3.4$]{apy}, these elements are contained in $K$. For $g=5$, the proof follows from the proof of \cite[Theorem $3.3$]{apy}. 
\end{proof}

Next, we deal with the even genera case.

\begin{theorem}\label{thm3}
If $g\geq6$ and even, then $\mod^{*}(\Sigma_g)$ is generated by the involutions $\rho_1$, $\rho_2$ and $\rho_1A_2C_{\frac{g}{2}}B_{\frac{g+4}{2}} C_{\frac{g+6}{2}}$.
\end{theorem}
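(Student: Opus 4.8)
The plan is to mirror the strategy of the odd case (Theorem~\ref{thm2}) and reduce the problem to showing that the four Korkmaz generators of Theorem~\ref{thm1}, together with an orientation-reversing involution, lie in the subgroup $K$ generated by the three proposed involutions. Let $K$ be the subgroup of $\mod^{*}(\Sigma_g)$ generated by $\lbrace \rho_1,\rho_2,\rho_1A_2C_{\frac{g}{2}}B_{\frac{g+4}{2}}C_{\frac{g+6}{2}}\rbrace$. First I would verify that the third generator is genuinely an involution: working from the even-genus picture in Figure~\ref{GEC}, I would check that the four curves $a_2$, $c_{g/2}$, $b_{(g+4)/2}$, $c_{(g+6)/2}$ are pairwise disjoint (so their Dehn twists commute) and that $\rho_1$ sends each of these curves to itself or to the others in a way making the conjugation-by-$\rho_1$ invert the relevant product. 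Concretely, using that $\rho_1$ reverses orientation on a neighbourhood of every simple closed curve, $\rho_1 A_2 \rho_1 = A_2^{-1}$, $\rho_1 C_{g/2}\rho_1 = C_{g/2}^{-1}$, and similarly for the remaining two twists (after identifying how $\rho_1$ permutes these particular curves in the even picture); squaring the word and using commutativity then collapses it to the identity.

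Next, since $R=\rho_1\rho_2$ is the rotation by $2\pi/g$ and $\rho_1$ is an orientation-reversing involution, both $R$ and $\rho_1$ already lie in $K$. By Theorem~\ref{thm1} it therefore suffices to produce $A_1A_2^{-1}$, $B_1B_2^{-1}$, and $C_1C_2^{-1}$ inside $K$. The key mechanism is that conjugating the third generator by $\rho_1$ (which is in $K$) yields a second product of twists, and multiplying the generator by this conjugate cancels the common $\rho_1$ factors, leaving a pure product of Dehn twists that lies in $K$. Conjugating such pure twist-products by powers of $R$ then slides the supporting curves around the surface by the rotation, producing many twists (and inverse twists) of the same chain curves in $K$. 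The even-genus analogue of the computation in the proof of \cite[Theorem~$3.4$]{apy} should let me assemble the specific differences $A_1A_2^{-1}$, $B_1B_2^{-1}$, $C_1C_2^{-1}$ from these rotated twist-products.

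The cleanest way to finish is to invoke the relevant even-genus results of \cite{apy} directly, exactly as Theorem~\ref{thm2} invokes the odd-genus ones: I would argue that for $g\geq 8$ and even the three differences lie in $K$ by the proof of the even-genus case of \cite[Theorem~$3.4$]{apy}, and handle $g=6$ separately via (the even-genus version of) the proof of \cite[Theorem~$3.3$]{apy}. The main obstacle I anticipate is the bookkeeping of how $\rho_1$ and $R$ act on the specific curves $c_{g/2}$, $b_{(g+4)/2}$, $c_{(g+6)/2}$ in the even picture: the indices are chosen so that these curves sit symmetrically with respect to the reflection axis, and I must confirm that the chosen curves are placed so that (i) they are mutually disjoint, (ii) $\rho_1$ inverts the product, and (iii) enough rotated copies are generated to recover all three Korkmaz differences. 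Once the curve combinatorics in Figure~\ref{GEC} is pinned down, the algebra follows the same template as the odd case.
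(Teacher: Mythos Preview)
Your approach is exactly the paper's: verify the third word is an involution, note $R=\rho_1\rho_2$ and $\rho_1$ lie in the subgroup, and then appeal to \cite{apy} to get the three Korkmaz differences $A_1A_2^{-1}$, $B_1B_2^{-1}$, $C_1C_2^{-1}$.

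Two small corrections to your bookkeeping. First, in the even-genus picture $\rho_1$ does \emph{not} fix $c_{g/2}$; rather $\rho_1(c_{g/2})=c_{(g+6)/2}$ (while $\rho_1$ fixes $a_2$ and $b_{(g+4)/2}$), so the correct identities are $\rho_1 A_2\rho_1=A_2^{-1}$, $\rho_1 B_{(g+4)/2}\rho_1=B_{(g+4)/2}^{-1}$, and $\rho_1 C_{g/2}\rho_1=C_{(g+6)/2}^{-1}$; since the four curves are pairwise disjoint this still gives $(\rho_1 A_2 C_{g/2} B_{(g+4)/2} C_{(g+6)/2})^2=1$. Second, the even-genus input from \cite{apy} is Theorem~3.5 there (not~3.3 or~3.4), and it covers all even $g\geq 6$ at once, so no separate $g=6$ argument is needed.
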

\begin{proof}
Consider the surface $\Sigma_{g}$ as in Figure~\ref{GEC} when $g\geq6$ and even. The involution $\rho_1$ satisfies
\[
 \rho_1(a_2)=a_2, \rho_1(b_{\frac{g+4}{2}})=b_{\frac{g+4}{2}} \textrm{ and } \rho_1(c_{\frac{g}{2}})=c_{\frac{g+6}{2}}.
\]
Since $\rho_1$ reverses the orientation of a neighbourhood of any simple closed curve, we have
\[
\rho_1A_2\rho_1=A_{2}^{-1}, \rho_1B_{\frac{g+4}{2}}\rho_1=B_{\frac{g+4}{2}}^{-1} \textrm{ and } \rho_1C_{\frac{g}{2}}\rho_1=C_{\frac{g+6}{2}}^{-1}.
\]

 It can be shown that $\rho_1A_2C_{\frac{g}{2}}B_{\frac{g+4}{2}} C_{\frac{g+6}{2}}$ 
is an involution. Let $H$ be the subgroup of $\mod^{*}(\Sigma_{g})$ generated by the set
	\[
	\lbrace \rho_1,\rho_2, \rho_1A_2C_{\frac{g}{2}}B_{\frac{g+4}{2}} C_{\frac{g+6}{2}} \rbrace.
	\]
Note that the rotation $R$ is in $H$. Since $H$ contains the orientation reversing diffeomorphism $\rho_1$ (or $\rho_2$), again all we need to show is that 
the elements $A_1A_{2}^{-1}, B_1B_{2}^{-1}$ and $C_1C_{2}^{-1}$ are contained in $H$. By the proof of ~\cite[Theorem $3.5$]{apy}, these elements are contained in $H$.
\end{proof}

\section{Involution generators for $\mod^{*}(\Sigma_{g,p})$}\label{S4}

In this section, we introduce punctures on a genus $g$ surface and present involution generators for the extended mapping class group $\mod^{*}(\Sigma_{g,p})$. First, we recall the following basic lemma from algebra.
\begin{lemma}\label{lemma1}
Let $G$ and $K$ be groups, Suppose that 
we have the following short exact sequence holds,
\[
1 \longrightarrow N \overset{i}{\longrightarrow}G \overset{\pi}{\longrightarrow} K\longrightarrow 1.
\]
Then the subgroup $H$ contains $i(N)$ and has a surjection to $K$ if and only if $H=G$.
\end{lemma}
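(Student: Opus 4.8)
The plan is to prove the two implications separately, treating the backward implication as essentially a formality and reserving the real (though still short) work for the forward implication. Throughout I would keep in mind that exactness of the sequence means precisely $\ker\pi = i(N)$, and that $H$ is a subgroup of $G$, so it is closed under products and inverses; these are the only two facts I expect to need.

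First I would dispatch the easy direction. If $H=G$, then $i(N)\subseteq G = H$ trivially, and $\pi(H)=\pi(G)=K$ because $\pi$ is surjective by exactness at $K$. Hence $H$ contains $i(N)$ and surjects onto $K$, which is exactly the hypothesis of the converse. No diagram chasing is required here.

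For the substantive direction, I would assume $H\leq G$ satisfies $i(N)\subseteq H$ and $\pi(H)=K$, and show that an arbitrary $g\in G$ lies in $H$; this gives $G\subseteq H$ and hence $H=G$. Given such a $g$, the surjectivity assumption $\pi(H)=K$ produces an element $h\in H$ with $\pi(h)=\pi(g)$. Then $\pi(gh^{-1})=\pi(g)\pi(h)^{-1}=1$, so $gh^{-1}\in\ker\pi$. By exactness $\ker\pi=i(N)$, and since $i(N)\subseteq H$ we conclude $gh^{-1}\in H$. Finally, because $H$ is a subgroup it is closed under multiplication, so $g=(gh^{-1})h\in H$, as desired.

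The only place demanding any care—and it is the step I would flag as the crux, however mild—is the transition $gh^{-1}\in\ker\pi = i(N)\subseteq H$, which is where exactness is genuinely used; the rest is bookkeeping that relies solely on $H$ being closed under the group operations. I do not anticipate a real obstacle, as the argument is the standard coset-lifting computation, but I would state the exactness identity $\ker\pi=i(N)$ explicitly so that its role is transparent.
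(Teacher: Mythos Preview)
Your proof is correct and complete; it is exactly the standard coset-lifting argument one expects for this elementary lemma. The paper itself does not supply a proof at all---it merely states the lemma as a ``basic lemma from algebra'' and moves on---so there is nothing to compare against beyond noting that your write-up fills in precisely the routine details the authors chose to omit.
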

\par

For $G=\mod^{*}(\Sigma_{g,p})$ and $N=\mod_{0}^{*}(\Sigma_{g,p})$ (self-diffeomorphisms fixing the punctures pointwise), we have the following short exact sequence:
\[
1\longrightarrow \mod_{0}^{*}(\Sigma_{g,p})\longrightarrow \mod^{*}(\Sigma_{g,p}) \longrightarrow S_{p}\longrightarrow 1,
\]
where $S_p$ denotes the symmetric group on the set $\lbrace1,2,\ldots,p\rbrace$. 
Therefore, we have the following useful result which follows immediately from Lemma~\ref{lemma1}. Let $H$ be a subgroup of $\mod^{*}(\Sigma_{g,p})$. If the subgroup $H$ contains $\mod_{0}^{*}(\Sigma_{g,p})$ and has a surjection to $S_p$ then $H=\mod^{*}(\Sigma_{g,p})$.

\begin{figure}[hbt!]
\begin{center}
\scalebox{0.22}{\includegraphics{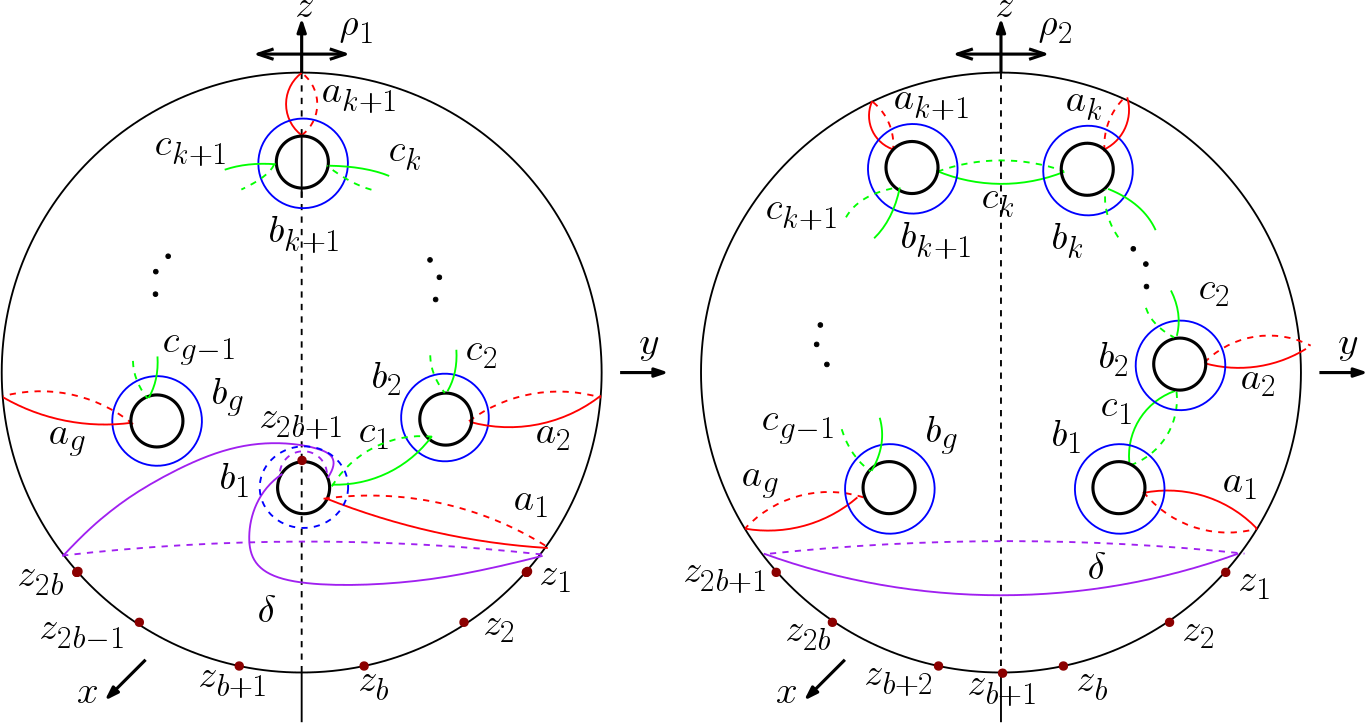}}
\caption{The reflections $\rho_1$ and $\rho_2$ if $g=2k$ and $p=2b+1$.}
\label{EO}
\end{center}
\end{figure}
\begin{figure}[hbt!]
\begin{center}
\scalebox{0.22}{\includegraphics{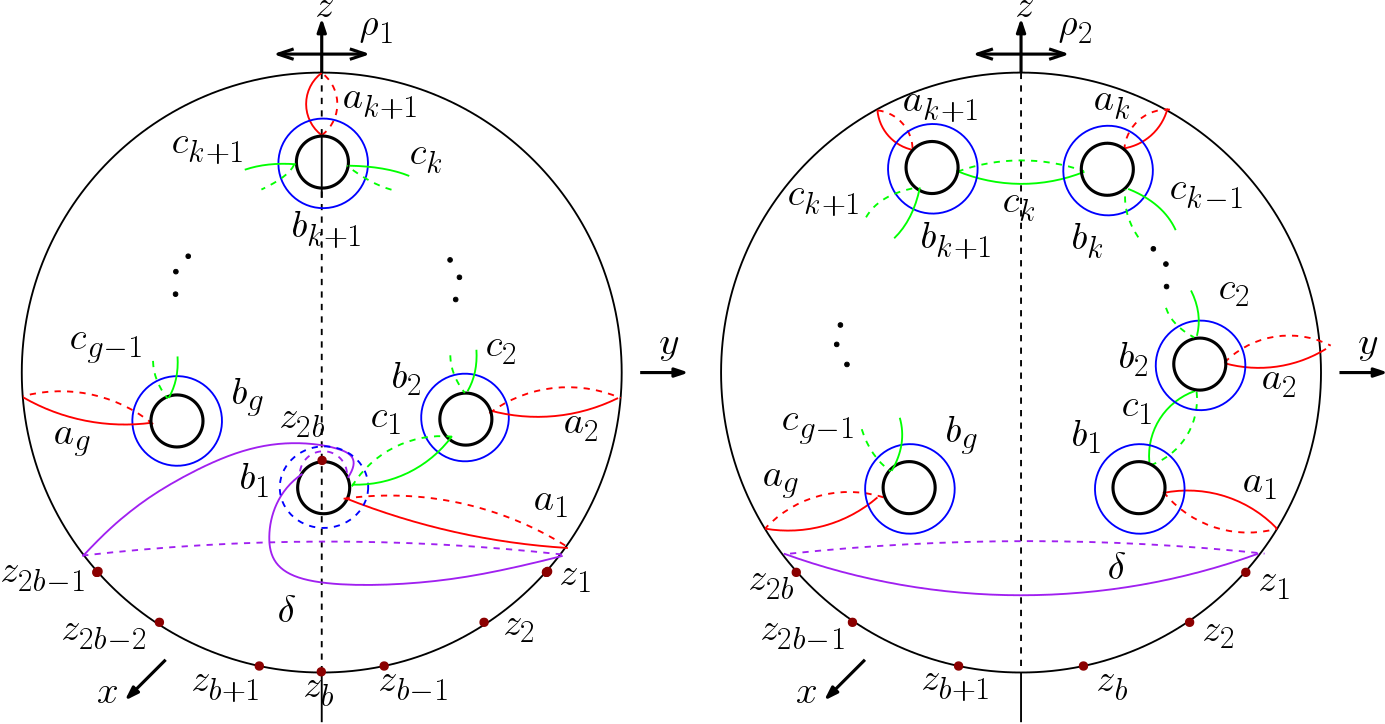}}
\caption{The reflections $\rho_1$ and $\rho_2$ if $g=2k$ and $p=2b$.}
\label{EE}
\end{center}
\end{figure}
 \begin{figure}[hbt!]
\begin{center}
\scalebox{0.2}{\includegraphics{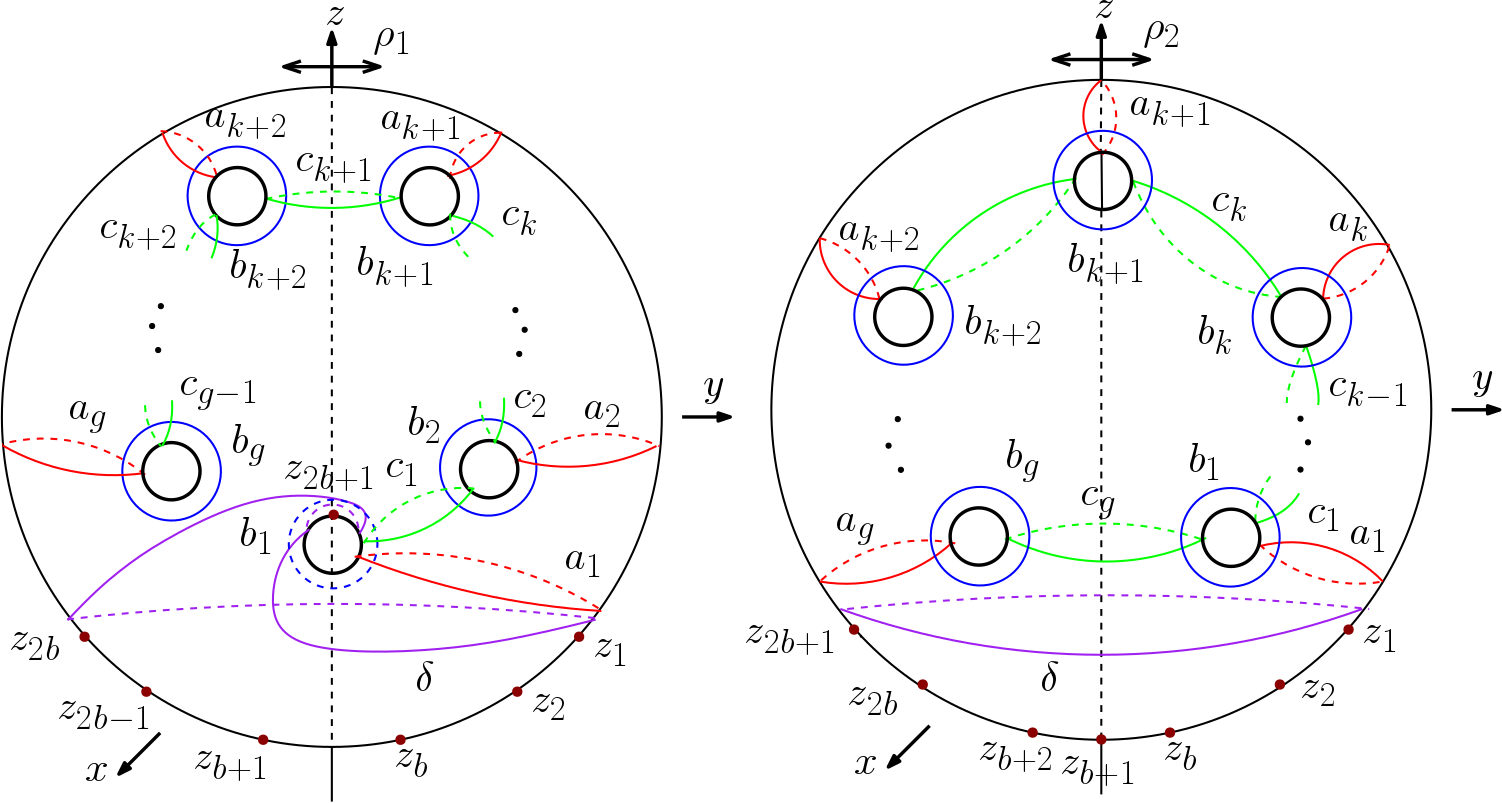}}
\caption{The reflections $\rho_1$ and $\rho_2$ if $g=2k+1$ and $p=2b+1$.}
\label{OO}
\end{center}
\end{figure}
\begin{figure}[hbt!]
\begin{center}
\scalebox{0.23}{\includegraphics{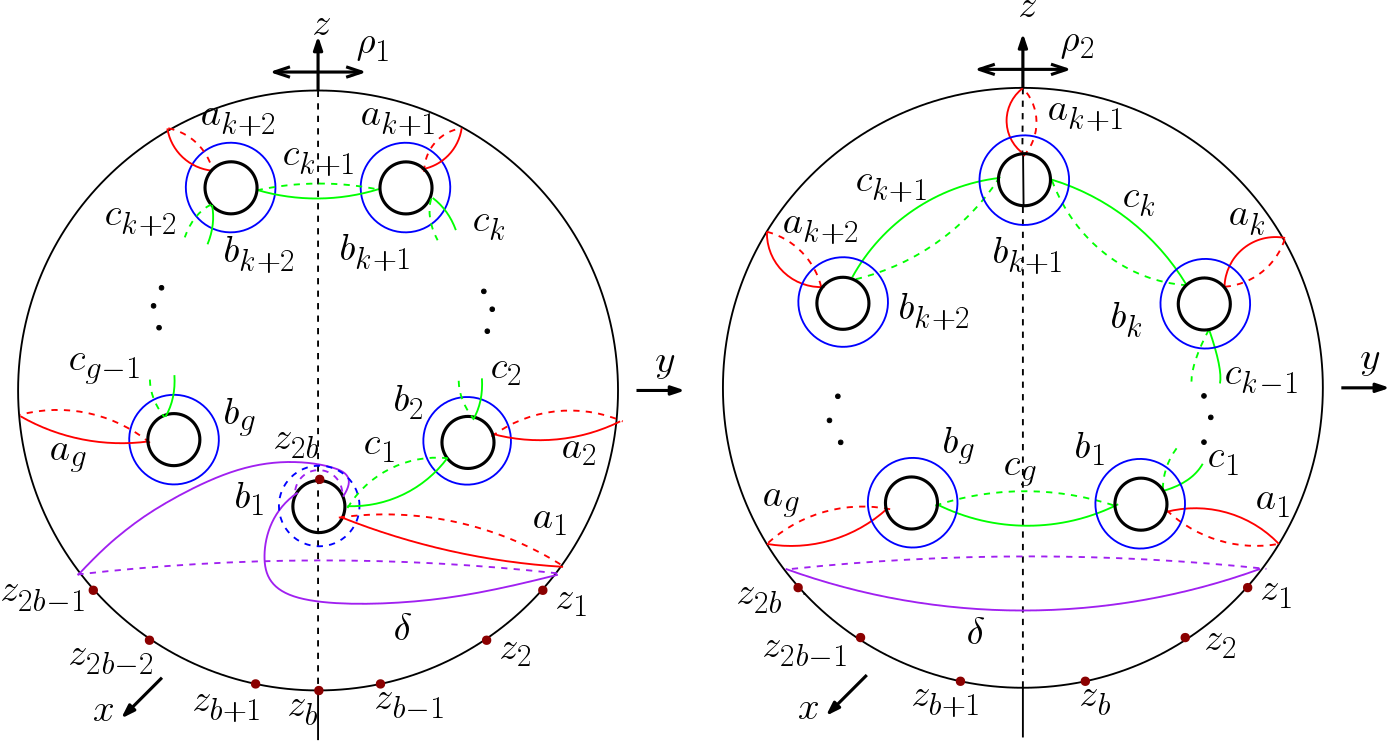}}
\caption{The reflections $\rho_1$ and $\rho_2$ if $g=2k+1$ and $p=2b$.}
\label{OE}
\end{center}
\end{figure}
In the presence of punctures, we consider the reflections $\rho_1$ and $\rho_2$ as shown in Figures~\ref{EO}, \ref{EE}, \ref{OO} and \ref{OE}. Note that the element $R=\rho_1\rho_2$ is contained in $\mod^{*}(\Sigma_{g,p})$ and we have
\begin{itemize}
\item $R(a_i)=a_{i+1}$, $R(b_i)=b_{i+1}$ for $i=1,\ldots,g-1$ and $R(b_g)=b_{1}$,
\item $R(c_i)=c_{i+1}$ for $i=1,\ldots,g-2$,
\item $R(z_1)=z_p$ and $R(z_i)=z_{i-1}$ for $i=2,\ldots,p$.
\end{itemize}
\noindent

In the proof of the following lemmata, we basically follow Theorem~\ref{thm1}.

\begin{lemma}\label{lem2k}
For $g=2k\geq 10$, the subgroup $H$  of $\mod^{*}(\Sigma_{g,p})$ generated by 
\begin{equation*}
 \begin{cases}
      \rho_1, \rho_2, \rho_2H_{b,b+2}B_{k-3}A_{k-1}C_kA_{k+2}B_{k+4} & \text {if  $p=2b+1\geq 7,$}\\
        \rho_1, \rho_2, \rho_2H_{b,b+1}B_{k-3}A_{k-1}C_kA_{k+2}B_{k+4} &  \text{if $p=2b\geq 6$}
    \end{cases}       
\end{equation*}
contains the Dehn twists $A_i$, $B_i$ and $C_j$ for $i=1,\ldots,g$ and $j=1,\ldots,g-1$.
\end{lemma}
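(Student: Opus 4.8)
The plan is to strip the orientation‑reversing reflection off the third generator and then argue entirely with Dehn twists, following the scheme used for Theorem~\ref{thm1} and in \cite{apy}. Write $g=2k$ and set $V=B_{k-3}A_{k-1}C_kA_{k+2}B_{k+4}$ and $H=H_{b,b+1}$ (respectively $H=H_{b,b+2}$ when $p$ is odd), so that the third generator is $\rho_2 HV$. Since $\rho_1,\rho_2\in H$ we have $R=\rho_1\rho_2\in H$, and multiplying the third generator on the left by $\rho_2$ gives $HV\in H$. Because $R$ is orientation preserving and acts by $R(x_i)=x_{i+1}$ for $x\in\{a,b,c\}$ together with the $p$‑cycle on the punctures recorded above, every conjugate $R^{n}(HV)R^{-n}$ is again in $H$; it is the product of a half‑twist about the image arc and the shifted word $V_n=B_{k-3+n}A_{k-1+n}C_{k+n}A_{k+2+n}B_{k+4+n}$.

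The first substantive step is to remove the half‑twist, since the target of the lemma is a set of \emph{pure} Dehn twists. The arc underlying $H$ is disjoint from all the curves $a_i,b_i,c_j$, so $H$ commutes with each $V_n$; pairing $HV$ with a conjugate whose half‑twist factor is the same arc (available either from a power $R^{m}$ that fixes the two punctures joined by the arc, or from $\rho_1$, which inverts the half‑twist about that arc) makes the two half‑twist contributions cancel and leaves a pure Dehn‑twist element of $H$. Conjugating by powers of $R$ then produces a whole family of pure Dehn‑twist words in $H$.

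Next comes the extraction of individual twists. Each $V_n$ is a product of twists about pairwise disjoint curves, hence its own factors commute; but two distinct words need not commute, and this is the key point. For instance the curve $a_{k-1+n}$ appearing in $V_n$ meets the curve $b_{k-1+n}$, which appears in $V_{n+2}$, so $V_n$ and $V_{n+2}$ fail to commute. This non‑commutativity is exactly what makes the braid relation available and lets me reproduce inside $H$ the three elements $A_1A_2^{-1}$, $B_1B_2^{-1}$, $C_1C_2^{-1}$ of Theorem~\ref{thm1} and then peel off a single Dehn twist. I emphasise that I cannot simply quote Theorem~\ref{thm1}: the forgetful map $\mod^{*}(\Sigma_{g,p})\to\mod^{*}(\Sigma_g)$ has a nontrivial point‑pushing kernel, so a word equal to $A_1$ downstairs need not equal $A_1$ upstairs. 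Instead I use that every relation invoked in the extraction (commutations, braid and chain relations) is supported in a subsurface containing no puncture, so it holds verbatim in $\mod^{*}(\Sigma_{g,p})$. Once one twist, say $C_k$, is shown to lie in $H$, conjugation by powers of $R$ spreads it to all the $C_j$, and the analogous argument yields every $A_i$ and $B_i$ in the stated ranges.

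I expect the main obstacle to be this extraction together with the index book‑keeping. The indices $k-3,k-1,k,k+2,k+4$ are chosen with distinct gaps precisely so that the needed intersections (and hence braid relations) occur where wanted while the spectator factors can be cancelled against shifted copies; keeping all shifted indices inside $\{1,\dots,g\}$ forces $g=2k\ge 10$, and arranging that the half‑twist arc is carried back to itself by the relevant symmetry, with the two punctures behaving as in Figures~\ref{EO} and~\ref{EE}, is what pins down the lower bounds on $p$. The two places demanding the most care are the clean elimination of the half‑twist and the verification that the isolating relations stay clear of the punctures.
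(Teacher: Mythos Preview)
Your overall architecture matches the paper's: obtain $R$ and $HV$, pass to $R$-conjugates, exploit the carefully spaced indices so that shifted copies have exactly one pair of intersecting curves, and end by producing $A_iB_i^{-1}$, $B_iC_i^{-1}$, $C_iB_{i+1}^{-1}$ and hence the three elements of Theorem~\ref{thm1}. Your remark that one cannot literally quote Theorem~\ref{thm1} but must instead observe that all relations used in its proof are supported away from the punctures is correct and is a point the paper leaves implicit.

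The gap is in your ``first substantive step''. Neither of your two proposed devices for stripping the half-twist works. No nontrivial power $R^m$ with $0<m<p$ fixes the pair $\{z_b,z_{b+2}\}$ (resp.\ $\{z_b,z_{b+1}\}$), since $R$ acts as a $p$-cycle on the punctures; and it is $\rho_2$, not $\rho_1$, that carries the arc of $H_{b,b+2}$ to itself. But conjugating $HV$ by $\rho_2$ simply returns $(HV)^{-1}$, since $\rho_2$ also sends $V$ to $V^{-1}$ (this is exactly the involution check in the proof of Theorem~B), so nothing new is produced. In short, there is no cheap symmetry that fixes the arc while genuinely moving the Dehn-twist curves.

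What the paper does instead is never to isolate the half-twist. It keeps the half-twist factor throughout and uses the conjugation $(E_iE_j)E_i(E_iE_j)^{-1}$, where $E_i,E_j$ are $R$-conjugates of $HV$ whose half-twist arcs are \emph{disjoint} (so the half-twists commute) and whose Dehn-twist curve systems share exactly one intersecting pair. This replaces one Dehn-twist factor of $E_i$ by the corresponding factor of $E_j$ while leaving the half-twist of $E_i$ untouched; for instance $E_4=(E_2E_3)E_2(E_2E_3)^{-1}$ still carries $H_{b+1,b+3}$. After one more $R$-shift this yields a second element $E_5$ with the \emph{same} half-twist $H_{b,b+2}$ as $E_1$ but a different Dehn-twist tail, and only then does the ratio $E_1E_5^{-1}$ become a pure Dehn-twist word. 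So the half-twist is eliminated as a by-product of the braid-type conjugation, not before it; your plan has these two stages in the wrong order, and the mechanism you propose for the first stage does not exist.
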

\begin{proof}
Consider the models for $\Sigma_{g,p}$ as shown in Figures~\ref{EO} and~\ref{EE}. Start with the case $p=2b+1$. Let $E_1:=H_{b,b+2}B_{k-3}A_{k-1}C_kA_{k+2}B_{k+4}$ so that the subgroup $H$ is generated by the elements $\rho_1$, $\rho_2$ and $\rho_2E_1$. Since $H$ contains the elements $\rho_1$, $\rho_2$ and $\rho_2E_1$, it follows that $H$ also contains the elements $R=\rho_1\rho_2$ and $E_1=\rho_2\rho_2E_1$.

Let $E_2$ denote the conjugation of $E_1$ by $R^{-1}$. Since 
\[
R^{-1}(b_{k-3},a_{k-1},c_k,a_{k+2},b_{k+4})=(b_{k-4},a_{k-2},c_{k-1},a_{k+1},b_{k+3})
\]
and 
\[
R^{-1}(z_{b},z_{b+2})=(z_{b+1},z_{b+3}),
\]
it follows that $E_2=R^{-1}E_1R=H_{b+1,b+3}B_{k-4}A_{k-2}C_{k-1}A_{k+1}B_{k+3}\in H$. Let $E_3$ be the conjugation of $E_2$ by $R^3$. Since the element $R^3$ satisfies
\[
R^3
(b_{k-4},a_{k-2},c_{k-1},a_{k+1},b_{k+3})=(b_{k-1},a_{k+1},c_{k+2},a_{k+4},b_{k+6})
\]
and 
\[
R^3(z_{b+1},z_{b+3})=(z_{b-2},z_{b}),
\]
the element
\[
E_3=R^3E_2R^{-3}=H_{b-2,b}B_{k-1}A_{k+1}C_{k+2}A_{k+4}B_{k+6}\in H.
\]
Consider the element $E_4=(E_2E_3)E_2(E_2E_3)^{-1}$, which is contained in $H$. Thus,
\[
E_4=H_{b+1,b+3}B_{k-4}A_{k-2}B_{k-1}A_{k+1}C_{k+2}
\]
As we have similar cases in the remaining parts of the paper, let us explain this calculation in more details. It is easy to verify that the diffeomorphism $E_2E_3$ maps the curves $\lbrace b_{k-4},a_{k-2},c_{k-1},a_{k+1},b_{k+3} \rbrace$ to the curves $\lbrace b_{k-4},a_{k-2},b_{k-1},a_{k+1},c_{k+2}  \rbrace$, respectively. Since the half twists $H_{b+1,b+3}$ and  $H_{b-2,b}$ commute, we get
\begin{eqnarray*}
E_4&=&(E_2E_3)E_2(E_2E_3)^{-1}\\
&=&(E_2E_3)(H_{b+1,b+3}B_{k-4}A_{k-2}C_{k-1}A_{k+1}B_{k+3})(E_2E_3)^{-1}\\
&=&H_{b+1,b+3}B_{k-4}A_{k-2}B_{k-1}A_{k+1}C_{k+2}.
\end{eqnarray*}
We also get the element
\[
E_5=RE_4R^{-1}=H_{b,b+2}B_{k-3}A_{k-1}B_{k}A_{k+2}C_{k+3}\in H.
\]
Hence the subgroup $H$ contains the element 
\[E_6=E_1E_5^{-1}=C_kB_{k+4}C_{k+3}^{-1}B_{k}^{-1}.
\]
Moreover, we have the following elements:
\begin{eqnarray*}
E_7&=&RE_5R^{-1}=H_{b-1,b+1}B_{k-2}A_{k}B_{k+1}A_{k+3}C_{k+4},\\
E_8&=&R^{-3}E_7R^{3}=H_{b+2,b+4}B_{k-5}A_{k-3}B_{k-2}A_{k}C_{k+1} 
\textrm{ and }\\
E_9&=&(E_7E_8)E_7(E_7E_8)^{-1}=H_{b-1,b+1}B_{k-2}A_{k}C_{k+1}A_{k+3}C_{k+4},
\end{eqnarray*}
are contained in $H$.
Thus, we obtain the element $E_7E_{9}^{-1}=B_{k+1}C_{k+1}^{-1} \in H$. By conjugating $B_{k+1}C_{k+1}^{-1}$ with powers of $R$, we have $B_{i}C_{i}^{-1} \in H$ for all $i=1,\ldots,g-1$. Moreover, the element $
E_6(B_kC_k^{-1})=B_{k+4}C_{k+3}^{-1}
$ is contained in $H$. Thus, each $B_{i+1}C_i^{-1}$ is in $H$ for all $i=1,\ldots,g-1$ by conjugating this element with powers of $R$. Consider the elements
 \begin{eqnarray*}
 E_{10}&=&(B_{k}C_{k}^{-1})(B_{k+5}C_{k+4}^{-1})(C_{k+4}B_{k+4}^{-1})E_1\\
 &=&H_{b,b+2}B_{k-3}A_{k-1}B_kA_{k+2}B_{k+5}, \\
 E_{11}&=&R^{-1}E_{10}R=H_{b+1,b+3}B_{k-4}A_{k-2}B_{k-1}A_{k+1}B_{k+4}\\
 E_{12}&=&R^{3}E_{11}R^{-3}=H_{b-2,b}B_{k-1}A_{k+1}B_{k+2}A_{k+4}B_{k+7} \textrm { and }\\
 E_{13}&=&(E_{11}E_{12})E_{11}(E_{11}E_{12})^{-1}=H_{b+1,b+3}B_{k-4}A_{k-2}B_{k-1}A_{k+1}A_{k+4},
 \end{eqnarray*}
 which are contained in $H$. Hence, $H$ contains the element 
$E_{13}E_{11}^{-1}=A_{k+4}B_{k+4}^{-1}$. Hence, $A_{i}B_{i}^{-1}\in H$ for $i=1,\ldots g$,
 by conjugating $A_{k+4}B_{k+4}^{-1}$ with powers of $R$.
Finally, we obtain the following elements:
\begin{eqnarray*}
A_1A_{2}^{-1}&=&(A_1B_1^{-1})(B_1C_1^{-1})(C_1B_{2}^{-1})(B_{2}A_{2}^{-1}),\\
B_1B_{2}^{-1}&=&(B_1C_1^{-1})(C_1B_{2}^{-1}) \textrm{ and }\\
C_1C_{2}^{-1}&=&(C_1B_{2}^{-1})(B_{2}C_{2}^{-1}),
\end{eqnarray*}
which are all contained in $H$. This completes the proof for $p=2b+1\geq 7$ by Theorem~\ref{thm1}.

For $p=2b\geq 6$, one can replace $H_{b,b+2}$ with $H_{b,b+1}$ and follow exactly the same steps as above.
\end{proof}


\begin{lemma}\label{lem2k+1}
For $g=2k+1\geq 13$, the subgroup $H$ of $\mod^{*}(\Sigma_{g,p})$ generated by 
\begin{equation*}
 \begin{cases}
      \rho_1, \rho_2, \rho_2H_{b,b+2}A_{k-1}C_{k-3}B_{k+1}C_{k+4}A_{k+3} & \text {if  $p=2b+1\geq 7,$}\\
        \rho_1, \rho_2, \rho_2H_{b,b+1}A_{k-1}C_{k-3}B_{k+1}C_{k+4}A_{k+3}&  \text{if $p=2b\geq 6$}
    \end{cases}       
\end{equation*}
contains the Dehn twists $A_i$, $B_i$ and $C_j$ for $i=1,\ldots,g$ and $j=1,\ldots,g-1$.
\end{lemma}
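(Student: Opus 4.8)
The plan is to closely mirror the even-genus argument of Lemma~\ref{lem2k}, re-deriving only the index bookkeeping for the odd-genus word. Write $E_1 := H_{b,b+2}A_{k-1}C_{k-3}B_{k+1}C_{k+4}A_{k+3}$ (and $E_1 := H_{b,b+1}A_{k-1}C_{k-3}B_{k+1}C_{k+4}A_{k+3}$ when $p=2b$), so that the generating set is $\{\rho_1,\rho_2,\rho_2E_1\}$. Since $\rho_2^2=1$, the subgroup $H$ immediately contains $R=\rho_1\rho_2$ and $E_1=\rho_2(\rho_2E_1)$, so everything below takes place in $\langle R, E_1\rangle$. Two preliminary checks are needed, and the hypothesis $g=2k+1\ge 13$ (that is, $k\ge 6$) is tailored to them: first, that the curves $a_{k-1}, c_{k-3}, b_{k+1}, c_{k+4}, a_{k+3}$ and the arc $\beta_{b,b+2}$ are pairwise disjoint, so that the factors of $E_1$ commute — a fact used constantly when rearranging products; and second, that every curve-index produced by the conjugations below remains admissible, i.e. lands in $\{1,\dots,g\}$ for $A_i,B_i$ (using the cyclic action $R(b_g)=b_1$ where necessary) and in $\{1,\dots,g-1\}$ for $C_j$.

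The computational engine is identical to the even case. Using the recorded action of $R$ on the curves and on the punctures $z_i$, I would conjugate $E_1$ by the powers $R^{\pm1}$ and $R^{\pm3}$ to produce shifted copies $E_2,E_3,\dots$, and then form products of the shape $(E_iE_j)E_i(E_iE_j)^{-1}$. The whole point of these shapes is that the half-twist factors, commuting because their defining arcs are disjoint, pair off and cancel, while the surviving Dehn-twist factors overlap in a single curve and telescope into a clean ``difference''. Proceeding exactly as in Lemma~\ref{lem2k}, I would extract elements of the forms $B_iC_i^{-1}$, $B_{i+1}C_i^{-1}$ and $A_iB_i^{-1}$, and then conjugate each representative by powers of $R$ to obtain these relations for every index $i$, which is free once one instance is in hand because $R$ permutes the curve labels cyclically.

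With $A_iB_i^{-1}$, $B_iC_i^{-1}$ and $B_{i+1}C_i^{-1}$ in $H$ for all $i$, the identities
\begin{eqnarray*}
A_1A_2^{-1}&=&(A_1B_1^{-1})(B_1C_1^{-1})(C_1B_2^{-1})(B_2A_2^{-1}),\\
B_1B_2^{-1}&=&(B_1C_1^{-1})(C_1B_2^{-1}),\\
C_1C_2^{-1}&=&(C_1B_2^{-1})(B_2C_2^{-1})
\end{eqnarray*}
show $A_1A_2^{-1},B_1B_2^{-1},C_1C_2^{-1}\in H$; since $R\in H$ too, Theorem~\ref{thm1} then places the full subgroup they generate, and in particular each $A_i,B_i,C_j$ in the stated ranges, inside $H$. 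The case $p=2b$ runs identically with $H_{b,b+2}$ replaced by $H_{b,b+1}$, since the puncture labels only ride along inside the half-twists and never appear in the final Dehn-twist identities. I expect the only real difficulty to be the combinatorics: because the odd-genus factor pattern $A_{k-1}C_{k-3}B_{k+1}C_{k+4}A_{k+3}$ spaces its indices differently from the even-genus pattern, the choice of conjugating exponents and of which products to form must be recomputed so that the half-twists still cancel and the remaining twists still overlap in exactly one curve; verifying disjointness at each stage and that no index leaves its range — which is precisely what $k\ge 6$ guarantees — is the step that needs the most care.
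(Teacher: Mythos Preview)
Your plan is essentially the paper's own proof: the paper sets $F_1$ for your $E_1$, conjugates by $R^{-1}$ and $R^{3}$ to obtain $F_2,F_3$, forms $(F_2F_3)F_2(F_2F_3)^{-1}$ and iterates, extracting $C_iB_{i+1}^{-1}$, then $B_iC_i^{-1}$, then $B_iA_i^{-1}$, and closes exactly with your three displayed identities and Theorem~\ref{thm1}. One small correction to your description of the mechanism: the conjugation $(E_iE_j)E_i(E_iE_j)^{-1}$ does \emph{not} make the half-twist cancel --- it preserves $E_i$'s half-twist factor (the conjugating element fixes the relevant arc) while replacing one or two of the Dehn-twist factors; the half-twist only disappears in a subsequent product $E_kE_l^{-1}$ of two elements carrying the \emph{same} half-twist, so when you actually run the computation you need that extra multiplication before a clean twist-quotient like $C_{k+2}B_{k+3}^{-1}$ appears.
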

\begin{proof}
Consider the models for $\Sigma_{g,p}$ as shown in Figures~\ref{OO} and~\ref{OE}. First let us consider the case $p=2b+1$. Let $F_1=H_{b,b+2}A_{k-1}C_{k-3}B_{k+1}C_{k+4}A_{k+3}$ so that the subgroup $H$ generated by the elements $\rho_1$, $\rho_2$ and $\rho_2F_1$. It follows from $H$ contains the elements $\rho_1$, $\rho_2$ and $\rho_2F_1$ that $H$ also contains the elements $R=\rho_1\rho_2$ and $F_1=\rho_2\rho_2F_1$.

Let $F_2$ denote the conjugation of $F_1$ by $R^{-1}$ so that 
\[
F_2=R^{-1}F_1R=H_{b+1,b+3}A_{k-2}C_{k-4}B_{k}C_{k+3}A_{k+2}\in H.
\]
and let $F_3$ be the conjugation of $F_2$ by $R^{3}$: 
\[
F_3=R^{3}F_2R^{-3}=H_{b-2,b}A_{k+1}C_{k-1}B_{k+3}C_{k+6}A_{k+5}\in H.
\]
From these, we get the following element:
\begin{eqnarray*}
F_4&=&(F_2F_3)F_2(F_2F_3)^{-1}\\
&=&H_{b+1,b+3}A_{k-2}C_{k-4}C_{k-1}B_{k+3}A_{k+2},
\end{eqnarray*}
which is contained in $H$. Thus, the subgroup $H$ contains the element 
\[
F_5=F_4F_2^{-1}=C_{k-1}B_{k+3}C_{k+3}^{-1}B_{k}^{-1}.
\]
Also we get the following elements:
\begin{eqnarray*}
F_6&=&R^{3}F_4R^{-3}=H_{b-2,b}A_{k+1}C_{k-1}C_{k+2}B_{k+6}A_{k+5} \textrm{ and }\\
F_7&=&(F_4F_6)F_4(F_4F_6)^{-1}=H_{b+1,b+3}A_{k-2}C_{k-4}C_{k-1}C_{k+2}A_{k+2},
\end{eqnarray*}
which are contained in $H$.
Hence, we see that the element $F_7F_{4}^{-1}=C_{k+2}B_{k+3}^{-1} \in H$, which implies that $C_{i}B_{i+1}^{-1} \in H$ for all $i=1,\ldots,g-1$ by the action of $R$. It follows from the element $B_{k}C_{k-1}^{-1}\in H$ that
$F_5(B_{k}C_{k-1}^{-1})=B_{k+3}C_{k+3}^{-1}
$ is also contained in $H$. Similarly we have $B_{i}C_i^{-1} \in H$ for all $i=1,\ldots,g-1$ by the action of $R$. Moreover, the elements
 \begin{eqnarray*}
 F_8&=&(B_{k+1}C_{k}^{-1})(C_{k}B_{k}^{-1})F_2\\
 &=&H_{b+1,b+3}A_{k-2}C_{k-4}B_{k+1}C_{k+3}A_{k+2}, \\
 F_9&=&R^{3}F_8R^{-3}=H_{b-2,b}A_{k+1}C_{k-1}B_{k+4}C_{k+6}A_{k+5} \textrm { and }\\
 F_{10}&=&(F_8F_9)F_8(F_8F_9)^{-1}=H_{b+1,b+3}A_{k-2}C_{k-4}A_{k+1}B_{k+4}A_{k+2}
 \end{eqnarray*}
are all in $H$. Thus $H$ contains the element 
$F_8F_{10}^{-1}(B_{k+4}C_{k+3}^{-1})=B_{k+1}A_{k+1}^{-1}$. Hence, $B_{i}A_{i}^{-1}\in H$ for $i=1,\ldots, g$ by conjugating this element with powers of $R$.
The remaining part of the proof can be completed as in the proof of Lemma~\ref{lem2k}.
\end{proof}
\begin{lemma}\label{lem11}
For $g=11$, the subgroup $H$ of $\mod^{*}(\Sigma_{g,p})$ generated by 
\begin{equation*}
 \begin{cases}
      \rho_1, \rho_2, \rho_1H_{b,b+1}B_1A_4C_6A_9 & \text {if  $p=2b+1\geq 15,$}\\
        \rho_1, \rho_2, \rho_1H_{b-1,b+1}B_1A_4C_6A_9 &  \text{if $p=2b\geq 16$}
    \end{cases}       
\end{equation*}
contains the Dehn twists $A_i$, $B_i$ and $C_j$ for $i=1,\ldots,g$ and $j=1,\ldots,g-1$.
\end{lemma}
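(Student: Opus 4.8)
The plan is to follow the template of Lemmas~\ref{lem2k} and~\ref{lem2k+1}, retuning the curve configuration to the borderline genus $g=11$, which lies just below the range $g\geq 13$ of Lemma~\ref{lem2k+1}. Writing $F_1=H_{b,b+1}B_1A_4C_6A_9$ in the odd case $p=2b+1$, I would first record that $\rho_1F_1$ is an involution: consulting Figure~\ref{OO}, the curves $b_1,a_4,c_6,a_9$ are pairwise disjoint and each is preserved by $\rho_1$, so that $\rho_1B_1\rho_1=B_1^{-1}$, $\rho_1A_4\rho_1=A_4^{-1}$, $\rho_1C_6\rho_1=C_6^{-1}$ and $\rho_1A_9\rho_1=A_9^{-1}$, while $\rho_1$ inverts the half-twist $H_{b,b+1}$; since all of these commute, $\rho_1F_1\rho_1=F_1^{-1}$. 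From the three generators one then extracts $R=\rho_1\rho_2$ and $F_1=\rho_1(\rho_1F_1)$, both of which lie in $H$.

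The core of the argument is the same conjugation-and-commutator engine used for the $E_i$ and $F_i$ above. Using the $R$-action $R(a_i)=a_{i+1}$, $R(b_i)=b_{i+1}$, $R(c_i)=c_{i+1}$ and $R(z_i)=z_{i-1}$, I would form a short list of conjugates $R^{m}F_1R^{-m}$ and combine them by swap moves of the form $(F_iF_j)F_i(F_iF_j)^{-1}$. Each such conjugation carries certain curves of $F_i$ to neighbouring ones (because the intermediate diffeomorphism $F_iF_j$ does so), hence replaces the corresponding Dehn twists in the product by twists about the image curves. Multiplying two such products that agree in all but one or two factors then cancels the common Dehn twists and, crucially, the half-twist factors $H_{\bullet,\bullet}$ — the latter commuting past the Dehn twists whenever their arcs are disjoint, and cancelling once the $R$-shifts are chosen so that their puncture labels coincide. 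The output of each cycle is a clean two-twist relation such as $B_iC_i^{-1}$, $C_iB_{i+1}^{-1}$ or $A_iB_i^{-1}$.

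Conjugating each isolated relation by powers of $R$ then spreads it over all indices, placing the full families $\{B_iC_i^{-1}\}$, $\{C_iB_{i+1}^{-1}\}$ and $\{A_iB_i^{-1}\}$ inside $H$. From these the telescoping identity
\[
A_1A_2^{-1}=(A_1B_1^{-1})(B_1C_1^{-1})(C_1B_2^{-1})(B_2A_2^{-1}),
\]
together with the analogous expressions $B_1B_2^{-1}=(B_1C_1^{-1})(C_1B_2^{-1})$ and $C_1C_2^{-1}=(C_1B_2^{-1})(B_2C_2^{-1})$, places $A_1A_2^{-1}$, $B_1B_2^{-1}$ and $C_1C_2^{-1}$ in $H$. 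Since $R\in H$ as well, Theorem~\ref{thm1} then forces $H$ to contain every $A_i$, $B_i$ and $C_j$, as claimed. The even case $p=2b$ is handled identically once $H_{b,b+1}$ is replaced by $H_{b-1,b+1}$ (Figure~\ref{OE}), since only the puncture labels attached to the half-twist change.

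The step I expect to be the main obstacle is the borderline geometry of $g=11$. Unlike a generic odd genus, here the indices $1,4,6,9$ are packed closely enough that the intermediate curves produced by a swap move could fail to be disjoint, or two half-twists could fail to commute or to match up for cancellation, either of which would break the clean isolation of a two-twist relation. The real work is therefore the finite but delicate verification that, for this specific four-twist configuration and the chosen $R$-shifts, every move $(F_iF_j)F_i(F_iF_j)^{-1}$ produces exactly the claimed element and every half-twist cancels; this amounts to tracking the disjointness and intersection pattern of each curve as it is transported by $F_iF_j$. Once these bookkeeping checks are in place, the argument closes exactly as in Lemma~\ref{lem2k}.
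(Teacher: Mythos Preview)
Your outline is the same strategy the paper uses: extract $R$ and $F_1$ from the generators, run the conjugation/swap engine to isolate the three families $A_iB_i^{-1}$, $B_iC_i^{-1}$, $C_iB_{i+1}^{-1}$, telescope, and invoke Theorem~\ref{thm1}. Two points deserve correction or expansion.

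First, your involution check contains a factual slip: in the model of Figure~\ref{OO} with $g=11$ the reflection $\rho_1$ does \emph{not} fix $a_4$ and $a_9$ individually but swaps them, so $\rho_1A_4\rho_1=A_9^{-1}$ and $\rho_1A_9\rho_1=A_4^{-1}$. The conclusion $\rho_1F_1\rho_1=F_1^{-1}$ survives because $A_4$ and $A_9$ commute, but the reasoning you wrote is wrong as stated.

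Second, and more substantively, you never account for the hypothesis $p\geq 15$. In the paper's actual computation the first two-twist relation to fall out is $A_1B_1^{-1}$, and reaching it requires iterating the $R^{3}$-shift three times on the half-twist label, producing $H_{b-6,b-5}$; the disjointness needed for the swap move $(G_5G_6)G_5(G_5G_6)^{-1}$ then forces $b-6\geq 1$, i.e.\ $p=2b+1\geq 15$. Your sketch speaks of ``choosing $R$-shifts so that puncture labels coincide'' and cancel, but in fact the half-twist factors are \emph{carried along unchanged} through each swap move (they never cancel there) and only drop out in the final quotient $G_iG_j^{-1}$ of two elements sharing the same half-twist. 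Getting two such elements with matching half-twist but differing in a single Dehn-twist factor is exactly what drives the index bookkeeping and the bound on $p$. Since you flag ``the finite but delicate verification'' as the real work and then omit it, the proposal is a correct plan but not yet a proof; the paper supplies the explicit thirteen-element sequence $G_1,\ldots,G_{13}$ that realises it.
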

\begin{proof}
Consider the models for $\Sigma_{g,p}$ as shown in Figures~\ref{OO} and~\ref{OE}. Let us first consider the case $p=2b+1$. Let $G_1=H_{b,b+1}B_1A_4C_6A_9$ and $H$ be the group generated by the elements $\rho_1$, $\rho_2$ and $\rho_1G_1$. It is easy to see that $H$ contains the elements $R=\rho_1\rho_2$ and $G_1=\rho_1\rho_1G_1$. We then have the following elements:
\begin{eqnarray*}
G_2&=&R^{-3}G_1R^3=H_{b+3,b+4}B_9A_1C_3A_6, \\
G_3&=&(G_1G_2)G_1(G_1G_2)^{-1}=H_{b,b+1}A_1A_4C_6B_9,\\
G_4&=&R^{3}G_3R^{-3}=H_{b-3,b-2}A_4A_7C_9B_1,\\
G_5&=&(G_4G_3)G_4(G_4G_3)^{-1}=H_{b-3,b-2}A_4A_7B_9A_1,\\
G_6&=&R^3G_5R^{-3}=H_{b-6,b-5}A_7A_{10}B_1A_4\textrm{ and }\\
G_7&=&(G_5G_6)G_5(G_5G_6)^{-1}=H_{b-3,b-2}A_4A_7B_9B_1,
\end{eqnarray*}
which are all in $H$. Thus, we obtain the element $G_5G_7^{-1}=A_1B_1^{-1}$. By conjugating by powers of $R$, we see that $A_iB_i^{-1}\in H$ for $i=1,2,\ldots,g$. We also have 
\begin{eqnarray*}
G_8&=&(B_4A_4^{-1})G_1=H_{b,b+1}B_1B_4C_6A_9 \in H,\\
G_{9}&=&R^{-3}G_8R^3=H_{b+3,b+4}B_9B_1C_3A_6 \in H \textrm{ and }\\
G_{10}&=&(G_{9}G_8)G_{9}(G_{9}G_8)^{-1}=H_{b+3,b+4}A_9B_1B_4A_6\in H.
\end{eqnarray*}
Hence, we get $G_{9}G_{10}^{-1}(A_9B_9^{-1})=C_3B_4^{-1}\in H$, which implies that $C_iB_{i+1}\in H$ for $i=1,2,\ldots,g-1$ by the action of $R$. Moreover, the subgroup $H$ contains the following elements:
\begin{eqnarray*}
G_{11}&=&(B_9A_9^{-1})G_1=H_{b,b+1}B_1A_4C_6B_9,\\
G_{12}&=&R^{-3}G_{11}R^3=H_{b+3,b+4}B_9A_1C_3B_6  \textrm{ and } \\
G_{13}&=&(G_{12}G_{11})G_{12}(G_{12}G_{11})^{-1}=H_{b+3,b+4}B_9B_1C_3C_6.
\end{eqnarray*}
It follows that $G_{12}G_{13}^{-1}(B_1A_1^{-1})=B_6C_6^{-1}$. Again, by the action of $R$, the elements $B_iC_i^{-1}\in H$. One can complete the remaining part of the proof as in the proof of Lemma~\ref{lem2k}.
\end{proof}

\begin{figure}[hbt!]
\begin{center}
\scalebox{0.25}{\includegraphics{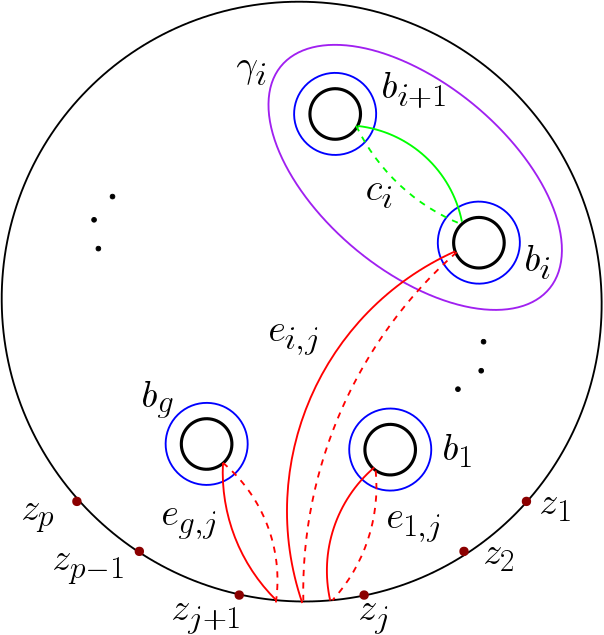}}
\caption{The curves $\gamma_i$ and $e_{i,j}$ on the surface $\Sigma_{g,p}$.}\label{C}
\end{center}
\end{figure}
\

\begin{lemma}\label{lemma3}
Let $g\geq2$. For $i=1,\ldots,g-1$, in the mapping class group $\mod(\Sigma_{g,p})$, the element 
\[
\phi_i=B_{i+1}\Gamma_{i}^{-1} C_{i}B_{i}
\]
maps the curve $e_{i,j}$ to the curve $e_{i+1, j}$, where the curves $\gamma_i$ and $e_{i,j}$'s are as in Figure~\ref{C}. Moreover, the diffeomorphism $\phi_i$ is contained in the group $H$ for $i=1,\ldots,g-1$.
\end{lemma}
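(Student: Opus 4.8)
The plan is to establish the two assertions of the lemma separately: first the geometric identity $\phi_i(e_{i,j})=e_{i+1,j}$, and then the membership $\phi_i\in H$. For the second assertion I would observe at the outset that $\phi_i=B_{i+1}\Gamma_i^{-1}C_iB_i$ is a product of four Dehn twists, three of which, namely $B_{i+1}$, $C_i$ and $B_i$, are about the curves $b_{i+1}$, $c_i$ and $b_i$ and therefore already lie in $H$ by Lemmas~\ref{lem2k}, \ref{lem2k+1} and~\ref{lem11} (recall that for $i=1,\ldots,g-1$ the indices $i,i+1\leq g$ and $i\leq g-1$ fall in the asserted ranges). Consequently, proving $\phi_i\in H$ reduces entirely to showing that $\Gamma_i\in H$, and I will treat this as the crux of the argument.

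For the identity $\phi_i(e_{i,j})=e_{i+1,j}$ I would work directly from Figure~\ref{C}. First I would read off the geometric intersection pattern of $e_{i,j}$ with the four curves $b_i$, $c_i$, $\gamma_i$ and $b_{i+1}$, noting in particular with which of them $e_{i,j}$ has nonzero intersection. Then, using the functional convention that the rightmost factor acts first, I would apply the twists in the order $B_i$, $C_i$, $\Gamma_i^{-1}$, $B_{i+1}$, at each stage replacing the current curve by its image under the surgery description of the corresponding (signed) Dehn twist and isotoping it into a standard position. The inverse on $\Gamma_i$ records that the twist about $\gamma_i$ is applied in the opposite sense, which is exactly what is needed to undo the extra intersections created by the preceding twists; after the final twist $B_{i+1}$ the image should settle onto $e_{i+1,j}$. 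This step is a routine but careful curve-chasing computation once the figure is in hand, and I expect it to be most transparent if each intermediate curve is drawn explicitly.

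The substantive point is to show $\Gamma_i\in H$. Here I would identify $\gamma_i$ precisely within the configuration of the chain $b_i,c_i,b_{i+1}$ visible in Figure~\ref{C} and then express $\Gamma_i$ in terms of twists already known to lie in $H$. Concretely, I would look for an element $f\in H$, a suitable product of the twists about the $a$-, $b$- and $c$-families furnished by the previous lemmas, with $f(d)=\gamma_i$ for some standard curve $d$ whose twist is in $H$; the conjugation rule then gives $\Gamma_i=f\,D^{\pm1}\,f^{-1}\in H$. Alternatively, should $\gamma_i$ turn out to be a boundary curve of a regular neighbourhood of the chain $b_i,c_i,b_{i+1}$ (hence separating), I would instead invoke the relevant chain relation, writing a power of $B_iC_iB_{i+1}$ as a product of the boundary twists and solving for $\Gamma_i$ inside the subgroup generated by twists already in $H$. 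The main obstacle is exactly this identification of $\gamma_i$ with a curve reachable from the standard families: once $\Gamma_i\in H$ is secured, the membership $\phi_i\in H$ is immediate, and the geometric identity is then only a matter of tracking $e_{i,j}$ through the four twists.
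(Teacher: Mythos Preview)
Your plan matches the paper's argument. The paper also declares the curve identity ``easy to see'' and reduces $\phi_i\in H$ to $\Gamma_i\in H$; for the latter it uses exactly your first alternative, exhibiting the concrete conjugator
\[
S=A_1B_1C_1\cdots C_{g-2}B_{g-1}C_{g-1}B_g\in H,
\]
observing $S(a_2)=\gamma_1$, whence $\Gamma_1=SA_2S^{-1}\in H$, and then conjugating by powers of $R$ to obtain all $\Gamma_i$. The one extra efficiency in the paper is this reduction to $i=1$ via $R$, which you did not mention. Your second alternative, the chain relation for $b_i,c_i,b_{i+1}$, would not work as stated: the $3$-chain relation gives $(B_iC_iB_{i+1})^4$ as the product of the twists about \emph{both} boundary components of the neighbourhood, so it does not isolate $\Gamma_i$ without already knowing the other boundary twist lies in $H$.
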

\begin{proof}
It is easy to see that the diffeomorphism $\phi_i$ maps $e_{i,j}$ to $e_{i+1,j}$. Consider the diffeomorphism 
\[
S=A_1B_1C_1\cdots C_{g-2}B_{g-1}C_{g-1}B_g. 
\]
Since $S \in H$ and $S$ maps $a_2$ to $ \gamma_1$, the element $SA_2S^{-1}=\Gamma_1\in H$. By conjugating with powers of $R$, the element $\Gamma_i$ is in $H$. We conclude that $\phi_i\in H$.
 \end{proof}

Let $H$ be the subgroup of $\mod^{*}(\Sigma_{g,p})$
generated by the elements given explicitly in lemmata~\ref{lem2k}, ~\ref{lem2k+1} and~\ref{lem11}  with the conditions mentioned in these lemmata.

\begin{lemma}\label{lemma4}
The group $\mod_{0}^{*}(\Sigma_{g,p})$ is contained in the group $H$.
\end{lemma}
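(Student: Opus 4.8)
The plan is to reduce the statement to the orientation-preserving pure mapping class group and then to exhibit, inside $H$, Dehn twists about a known generating family of curves. Since $\mod_{0}(\Sigma_{g,p})$ sits inside $\mod_{0}^{*}(\Sigma_{g,p})$ with index two, it suffices to show that (i) $\mod_{0}(\Sigma_{g,p})\subseteq H$ and (ii) $H$ contains at least one orientation-reversing diffeomorphism that fixes every puncture. For (ii) I would start from $\rho_1\in H$, which is orientation-reversing but in general only permutes the punctures; correcting its induced permutation by multiplying with a suitable product of the half-twists $H_{i,j}$ and powers of $R$ (all of which lie in $H$) yields an orientation-reversing element of $H$ fixing the set $P$ pointwise. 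Granting (i), any orientation-reversing $f\in\mod_{0}^{*}(\Sigma_{g,p})$ then differs from this element by a member of $\mod_{0}(\Sigma_{g,p})\subseteq H$, so $f\in H$.

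For (i) I would first collect the twists already known to be in $H$. By Lemmas~\ref{lem2k}, \ref{lem2k+1} and~\ref{lem11}, the subgroup $H$ contains every genus twist $A_i$, $B_i$ for $i=1,\ldots,g$ and $C_j$ for $j=1,\ldots,g-1$, together with $R=\rho_1\rho_2$ and the half-twists $H_{i,j}$ occurring in its generators. Squaring a half-twist gives the Dehn twist $H_{i,j}^{2}$ about the boundary of the twice-punctured disk $D_{i,j}$, so $H$ already contains a Dehn twist about a curve enclosing a pair of punctures. Thus the only twists still to be produced are those about the puncture-capturing curves $e_{i,j}$ of Figure~\ref{C}.

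Here I would invoke Lemma~\ref{lemma3}: since $\phi_i\in H$ and $\phi_i(e_{i,j})=e_{i+1,j}$, the conjugation property gives $\phi_i E_{i,j}\phi_i^{-1}=E_{i+1,j}$, so a single twist $E_{1,j}$ propagates across all handle indices $i=1,\ldots,g-1$, while powers of $R$ (which cyclically permute the handles and the punctures) together with the half-twists move the index $j$. Hence it is enough to realise one seed twist $E_{1,j_0}$ as a word in the twists already shown to lie in $H$; this I would do by expressing the curve $e_{1,j_0}$ through a chain or lantern relation among the genus curves $a_i,b_i,c_i$ and a puncture-enclosing curve $\partial D_{i,j}$. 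Once all the $E_{i,j}$ and all genus twists are in $H$, I would finish by citing the standard fact that Dehn twists about the genus curves together with the puncture-capturing curves $e_{i,j}$ generate the pure mapping class group $\mod_{0}(\Sigma_{g,p})$.

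The main obstacle is the geometric identification in the preceding paragraph: verifying that the curves $e_{i,j}$ together with the genus curves do constitute a generating set for $\mod_{0}(\Sigma_{g,p})$, and that a seed twist $E_{1,j_0}$ can genuinely be written using only relations among twists already known to be in $H$. The transport arguments with $R$ and $\phi_i$ are routine bookkeeping once this seed is in place; the real content lies in matching the produced family of twists to an existing generating theorem for surfaces with punctures.
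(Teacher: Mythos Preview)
Your outline correctly isolates the problem to producing the twists $E_{i,j}$ inside $H$, and your appeal to Lemma~\ref{lemma3} for moving the handle index $i$ via the $\phi_i$'s is exactly right.  However, two of your steps do not work as written.

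First, your \emph{seed}: you propose to obtain some $E_{1,j_0}$ ``through a chain or lantern relation among the genus curves $a_i,b_i,c_i$ and a puncture-enclosing curve $\partial D_{i,j}$''.  No such relation produces a single $E_{1,j_0}$ from those ingredients, and you have not specified one.  The paper instead exploits the geometry of $R$: on these models $R(a_g)=e_{1,p-1}$, so $E_{1,p-1}=RA_gR^{-1}\in H$ immediately, with no relation needed.

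Second, your \emph{propagation of the puncture index $j$}: you claim that ``powers of $R$ \ldots\ together with the half-twists move the index $j$''.  But at this point $H$ contains only the single half-twist appearing in its third generator (e.g.\ $H_{b,b+2}$), not a general $H_{i,j}$; and conjugating $E_{1,j}$ by $R$ shifts \emph{both} the handle and the puncture labels simultaneously, so $R(e_{1,j})$ is not of the form $e_{i',j'}$ at all (it bounds a non-contiguous set of punctures).  The paper's mechanism is different and is the real content of the lemma: apply $\phi_{g-1}\cdots\phi_1$ to carry $E_{1,j}$ to $E_{g,j}$ (handle index moves, puncture index fixed), and then observe that on these models $R(e_{g,j})=e_{1,j-1}$, giving $E_{1,j-1}=RE_{g,j}R^{-1}$.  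Iterating these two moves runs $j$ down from $p-1$ to $1$.

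Your part (ii), producing an orientation-reversing element of $H$ fixing the punctures pointwise, is a point the paper does not treat explicitly; your idea is sound, but the phrase ``the half-twists $H_{i,j}$ \ldots\ (all of which lie in $H$)'' again overclaims.  What is actually available is the single half-twist from the generator together with $R$, and one must invoke Lemma~\ref{symm} (or its even-$p$ analogue) to see that these already realise every permutation of the punctures, in particular the one induced by $\rho_1$.
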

\begin{proof}
Since the group $H$ contains the Dehn twists $A_1$, $A_2$, $B_1,B_2,\ldots,B_g$ and $C_1,C_2,\ldots,C_{g-1}$ by lemmata~\ref{lem2k}, ~\ref{lem2k+1} and~\ref{lem11}, it suffices to prove that $H$ also contains the elements $E_{i.j}$ for some fixed $i$ and $j=1,2,\ldots,p-1$. First note that $H$ contains $A_{g}$ and $R=\rho_1\rho_2$. Consider the models for $\Sigma_{g,p}$ as shown in Figures~\ref{EO},~\ref{EE}.~\ref{OO} and~\ref{OE},  Since the diffeomorphism $R$ maps $a_{g}$ to $e_{1,p-1}$, we have 
\[
RA_{g}R^{-1}=E_{1,p-1} \in H.
\]
The diffeomorphism $\phi_{g-1}\cdots \phi_2\phi_1$ in Lemma~\ref{lemma3} is given by 
$\phi_i=B_{i+1}\Gamma_i^{-1}C_iB_i$ which maps each $e_{i,j}$ to $e_{i+1,j}$ for $j=1,2,\ldots,p-1$ (see Figure~\ref{C}). So we get
\[
\phi_{g-1}\cdots \phi_2\phi_1E_{1,p-1}(\phi_{g-1}\cdots \phi_2\phi_1)^{-1}=E_{g,p-1}\in H.
\]
Similarly, the diffeomorphism $R$ sends $e_{g,p-1}$ to $e_{1,p-2}$. Then we have
\[
RE_{g,p-1}R^{-1}=E_{1,p-2}\in H.
\]
It follows from 
\[
\phi_{g-1}\cdots \phi_2\phi_1E_{1,p-2}(\phi_{g-1}\cdots \phi_2\phi_1)^{-1}=E_{g,p-2}\in H
\]
 that
 \[
 R(E_{g,p-2})R^{-1}=E_{1,p-3}\in H.
 \]
 Continuing in this way, we conclude that the elements $E_{1,1},E_{1,2},$ $\ldots,E_{1,p-1}$ are contained in $H$. This completes the proof.
\end{proof}
We thank the referee for pointing us the proof of the following lemma.
\begin{lemma}\label{symm}
The symmetric group $S_{2b+1}$ is generated by the transposition $(b,b+2)$ and the $(2b+1)$-cycle $(1,2,\ldots,2b+1)$.
\end{lemma}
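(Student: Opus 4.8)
The plan is to show that the subgroup $G \leq S_{2b+1}$ generated by $\tau = (b,b+2)$ and the full cycle $\sigma = (1,2,\ldots,2b+1)$ is everything, by first producing from these two elements a large family of transpositions and then invoking a connectivity/relabeling argument. Write $n = 2b+1$ and identify the ground set with $\Z/n\Z$ via the representatives $1,\ldots,n$, so that $\sigma$ acts as $i \mapsto i+1 \pmod n$.

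First I would conjugate $\tau$ by the powers of $\sigma$. Since conjugation by $\sigma$ simply applies $\sigma$ to the entries of a cycle, for each integer $k$ one gets
\[
\sigma^{k}\tau\sigma^{-k} = (b+k,\, b+2+k),
\]
with the two entries reduced modulo $n$ into $\{1,\ldots,n\}$. Letting $k$ run over $0,1,\ldots,n-1$ produces exactly the $n$ ``distance-two'' transpositions $(j,\,j+2)$ for $j \in \Z/n\Z$, all of which therefore lie in $G$. Let $G'$ be the subgroup they generate, so that $G' \leq G \leq S_n$.

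The key step is to upgrade these distance-two transpositions to \emph{adjacent} transpositions by a relabeling. Because $n$ is odd, $2$ is a unit modulo $n$, with inverse $b+1$ (indeed $2(b+1) = n+1 \equiv 1 \pmod n$), so the map $\psi\colon x \mapsto (b+1)x \pmod n$ is a bijection of $\Z/n\Z$, i.e.\ an element of $S_n$. Viewing $\psi$ as a permutation and conjugating, one finds $\psi(j,\,j+2)\psi^{-1} = \big(\psi(j),\,\psi(j+2)\big) = \big((b+1)j,\,(b+1)j+1\big)$, an adjacent transposition; as $j$ varies these give all adjacent transpositions $(i,\,i+1)$, $i \in \Z/n\Z$. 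Since the adjacent transpositions generate $S_n$, we get $\psi G' \psi^{-1} = S_n$, whence $G' = S_n$ and therefore $G = S_n$. (Equivalently, one may phrase this via the standard fact that a set of transpositions generates $S_n$ precisely when the graph on $\{1,\ldots,n\}$ with those pairs as edges is connected; here the distance-two pairs form a single $n$-cycle on $\Z/n\Z$ exactly because $\gcd(2,n)=1$.)

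I do not expect a genuine obstacle here: the conjugation computation and the generation by adjacent transpositions are routine, and the only place where any hypothesis is used is the oddness of $n = 2b+1$, which is precisely what makes $2$ invertible modulo $n$ (equivalently, makes the distance-two graph connected). Thus the one point to state carefully is that $\gcd(2,2b+1)=1$, and the rest is bookkeeping of indices modulo $n$.
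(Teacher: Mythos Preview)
Your proof is correct and follows essentially the same idea as the paper: both exploit that $\gcd(2,2b+1)=1$ to relabel so that distance-two becomes adjacent. The only cosmetic difference is that the paper, instead of generating all transpositions $(j,j+2)$, observes directly that $\sigma^{2}$ is a full cycle (your relabeled cycle) in which the single conjugate $\sigma^{-(b-1)}\tau\sigma^{\,b-1}=(1,3)$ is an adjacent transposition, and then quotes the standard ``full cycle plus adjacent transposition'' fact.
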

\begin{proof}
Set $\tau=(b,b+2)$ and $\sigma=(1,2,\ldots,2b+1)$. It is easy to verify that
\[
\sigma^{2}=(1,3,5,\ldots,2b+1,2,4,6,\ldots,2b).
\]
Now, rewrite $s_i=2i-1$ for $i=1,2,\ldots, b+1$ and $s_{b+1+i}=2i$ for $i=1,2,\ldots, b$. This gives
\[
\sigma^{-b+1}\tau\sigma^{b-1}=(s_1,s_2),
\]
\[
\sigma^{2}=(s_1,s_2,\ldots,s_{2b+1}).
\]
Since $(s_1,s_2)$ and $(s_1,s_2,\ldots,s_{2b+1})$ generate $S_{2b+1}$, we see that $S_{2b+1}=\langle \tau,\sigma \rangle$.
\end{proof}

Now, we are ready to prove the main theorem of this section.

\textit{Proof of Theorem B.}
 Consider  the surface $\Sigma_{g,p}$ as in Figures~\ref{EO}
and~\ref{EE}.\\
\underline{ If $g=2k\geq 10$ and $p\geq 6$}: In this case, consider the surface $\Sigma_{g,p}$ as in Figures~\ref{EO}
and~\ref{EE}. Since
\[
\rho_2(b_{k-3})=b_{k+4}, \rho_2(a_{k-1})=a_{k+2} \textrm{ and }\rho_2(c_{k})=c_{k}
\]
and $\rho_2$ is an orientation reversing diffeomorphism, we get
\[
\rho_2B_{k-3}\rho_2=B_{k+4}^{-1},
\rho_2A_{k-1}\rho_2=A_{k+2}^{-1} \textrm{ and }
\rho_2C_{k}\rho_2=C_{k}^{-1}.
\]
Also, observe that $\rho_2H_{b,b+2}\rho_2=H_{b,b+2}^{-1}$ for $p=2b+1$ and $\rho_2H_{b,b+1}\rho_2=H_{b,b+1}^{-1}$ for $p=2b$.
Then it is easy to see that each 
  \begin{equation*}
 \begin{cases}
       \rho_2H_{b,b+2}B_{k-3}A_{k-1}C_kA_{k+2}B_{k+4} & \text {if  $p=2b+1,$}\\
        \rho_2H_{b,b+1}B_{k-3}A_{k-1}C_kA_{k+2}B_{k+4} &  \text{if $p=2b$}
    \end{cases}       
\end{equation*}   
is an involution. Therefore, the generators of the subgroup $H$ given in Lemma~\ref{lem2k} are involutions.

\underline{ If $g=2k+1\geq 13$ and $p\geq 6$}: In this case, consider the surface $\Sigma_{g,p}$ as in Figures~\ref{OO}
and~\ref{OE}. It follows from
\[
\rho_2(a_{k-1})=a_{k+3}, \rho_2(c_{k-3})=c_{k+4} \textrm{ and }\rho_2(b_{k+1})=b_{k+1}
\]
and $\rho_2$ is an orientation reversing diffeomorphism that
\[
\rho_2A_{k-1}\rho_2=A_{k+3}^{-1},
\rho_2C_{k-3}\rho_2=C_{k+4}^{-1} \textrm{ and }
\rho_2B_{k+1}\rho_2=B_{k+1}^{-1}.
\]
Also, by the fact that $\rho_2H_{b,b+2}\rho_2=H_{b,b+2}^{-1}$ for $p=2b+1$ and $\rho_2H_{b,b+1}\rho_2=H_{b,b+1}^{-1}$ for $p=2b$,
it is easy to see that the elements
\begin{equation*}
 \begin{cases}
  \rho_2H_{b,b+2}A_{k-1}C_{k-3}B_{k+1}C_{k+4}A_{k+3} & \text {if  $p=2b+1$,}\\
       \rho_2H_{b,b+1}A_{k-1}C_{k-3}B_{k+1}C_{k+4}A_{k+3}&  \text{if $p=2b$}
    \end{cases}       
\end{equation*}
are involutions.

\underline{ If $g=11$ and $p\geq 15$}: Consider the surface $\Sigma_{g,p}$ as in Figures~\ref{OO}
and~\ref{OE}. It is easy to see that
\[
\rho_1(b_{1})=b_{1}, \rho_1(a_{4})=a_{9} \textrm{ and }\rho_1(c_{6})=c_{6}
\]
and $\rho_1$ is an orientation reversing diffeomorphism that
\[
\rho_1B_{1}\rho_1=B_{1}^{-1},
\rho_1A_{4}\rho_1=A_{9}^{-1} \textrm{ and }
\rho_1C_{6}\rho_1=C_{6}^{-1}.
\]
Also, since $\rho_1H_{b,b+1}\rho_1=H_{b,b+1}^{-1}$ for $p=2b+1$ and $\rho_1H_{b-1,b+1}\rho_1=H_{b-1,b+1}^{-1}$ for $p=2b$, it is easy to verify that the elements
\begin{equation*}
 \begin{cases}
  \rho_1H_{b,b+1}B_{1}A_{4}C_{6}A_{9} & \text {if  $p=2b+1$,}\\
       \rho_1H_{b-1,b+1}B_{1}A_{4}C_{6}A_{9} &  \text{if $p=2b$}
    \end{cases}       
\end{equation*}
are involutions. We see that the generators of the subgroup $H$ given in Lemma~\ref{lem11} are involutions.

The group $\mod_{0}^{*}(\Sigma_{g,p})$ is contained in $H$ by Lemma~\ref{lemma4}. We finish the proof by showing that $H$ is mapped surjectively onto $S_p$ by Lemma~\ref{lemma1}: The subgroup $H$ contains the element $\rho_2\rho_1$ which has the image $(1,2,\ldots,p)\in S_p$. For $g\neq 11$, since the subgroup $H$ contains the Dehn twists $A_i$, $B_i$ and $C_i$ by lemmata~\ref{lem2k} and~\ref{lem2k+1} , the group $H$ contains the half twist $H_{b,b+2}$ if $p=2b+1$ and the half twist $H_{b,b+1}$ if $p=2b$. For $p=2b+1$, it follows from Lemma~\ref{symm} that the image of $H_{b,b+2}$ which is $(b,b+2)$ and the $p$-cycle $(1,2,\ldots,p)$ generate $S_p$. For $p=2b$, it is clear that the image of $H_{b,b+1}$ which is $(b,b+1)$ and again the $p$-cycle $(1,2,\ldots,p)$ generate $S_p$. Likewise, for $g=11$, by Lemma~\ref{lem11}, the subgroup $H$ contains the half twist $H_{b,b+1}$ if $p=2b+1$, the half twist $H_{b-1,b+1}$ if $p=2b$. For the latter case $H$ also contains the half twist $R^{-1}H_{b-1,b+1}R=H_{b,b+2}$. This finishes the proof by the above argument.

Before we finish the paper let us mention the cases $p=2$ or $p=3$.
In these cases, the generating set of $H$ can be chosen as 
\[
H=\left\{\begin{array}{lll}
\lbrace \rho_1,\rho_2, \rho_2B_{k-3}A_{k-1}C_kA_{k+2}B_{k+4} \rbrace & \textrm{if} & g=2k\geq10,\\
\lbrace \rho_1,\rho_2,\rho_2A_{k-1}C_{k-3}B_{k+1}C_{k+4}A_{k+3} \rbrace & \textrm{if} & g=2k+1\geq13.\\
\lbrace \rho_1,\rho_2,\rho_1B_1A_4C_6A_9\rbrace & \textrm{if} & g=11.\\
\end{array}\right.
\]
One can easily prove that the group $H$ contains $\mod_{0}^{*}(\Sigma_{g,p})$ by the similar arguments in the proofs of  lemmata~\ref{lem2k+1}, \ref{lem2k}, \ref{lem11} and \ref{lemma4}. The element $\rho_2\rho_1 \in H$ has the image $(1,2,\ldots,p)\in S_p$. Thus, for $p=2$ this element generates $S_p$. If $p=3$,  the element $\rho_1$ has the image $(1,2)$. Therefore, the group $H$ is mapped surjectively onto $S_p$ for $p=2,3$, We conclude that the group $H$ is equal to $\mod^{*}(\Sigma_{g,p})$.


\end{document}